\newtheorem{theorem}{Theorem}[section]
\newtheorem*{theorem*}{Theorem}
\newtheorem*{problem*}{Problem}
\newtheorem{lemma}[theorem]{Lemma}
\newtheorem{proposition}[theorem]{Proposition}
\newtheorem{corollary}[theorem]{Corollary}
\newtheorem{definition}[theorem]{Definition}
\theoremstyle{remark}
\newtheorem{remark}[theorem]{Remark}
\newtheorem{Problem}[theorem]{Problem}
\begin{document}

\title[Schur multipliers]{Schur multipliers on $\mathcal{B}(L^p,L^q)$}
 
\author[C. Coine]{Cl\'ement Coine}
\email{clement.coine@univ-fcomte.fr}

\address{Laboratoire de Mathématiques de Besan\c con, UMR 6623, CNRS, Universit\'e Bourgogne Franche-Comt\'e,
25030 Besan\c{c}on Cedex, FRANCE}

\maketitle

\begin{abstract}
Let $(\Omega_1, \mathcal{F}_1, \mu_1)$ and $(\Omega_2, \mathcal{F}_2, \mu_2)$ be two measure spaces and let $1 \leq p,q \leq +\infty$. We give a definition of Schur multipliers on $\mathcal{B}(L^p(\Omega_1), L^q(\Omega_2))$ which extends the definition of classical Schur multipliers on $\mathcal{B}(\ell_p,\ell_q)$. Our main result is a characterization of Schur multipliers in the case $1\leq q \leq p \leq +\infty$. When $1 < q \leq p < +\infty$, $\phi \in L^{\infty}(\Omega_1 \times \Omega_2)$ is a Schur multiplier on $\mathcal{B}(L^p(\Omega_1), L^q(\Omega_2))$ if and only if there are a measure space (a probability space when $p\neq q$) $(\Omega,\mu)$, $a\in L^{\infty}(\mu_1, L^{p}(\mu))$ and $b\in L^{\infty}(\mu_2, L^{q'}(\mu))$ such that, for almost every $(s,t) \in \Omega_1  \times \Omega_2$,
$$\phi(s,t)=\left\langle a(s), b(t) \right\rangle.$$
Here, $L^{\infty}(\mu_1, L^{r}(\mu))$ denotes the Bochner space on $\Omega_1$ valued in $L^r(\mu)$. This result is new, even in the classical case. As a consequence, we give new inclusion relationships between the spaces of Schur multipliers on $\mathcal{B}(\ell_p,\ell_q)$.
\end{abstract}

\section{Introduction}
If $1 \leq r < +\infty$, we denote by $\ell_r$ the Banach space of $r-$summable sequences $(x_i)_{i\geq 1} \subset \mathbb{C}$ (that is, $\sum_i |x_i|^r < +\infty$) endowed with the norm $\|x\|_{\ell_r}=\left( \sum_i |x_i|^r \right)^{1/r}$. Let $\ell_{\infty}$ be the Banach space of bounded sequences $(y_i)_{i\geq 1} \subset \mathbb{C}$ with the norm $\|y\|_{\ell_{\infty}}=\sup_i |y_i|$. If $n\in \mathbb{N}$, we denote by $\ell_r^n$ the $n-$dimensional versions of the spaces introduced before.\\

Let $m=(m_{ij})_{i,j \geq 1}$ be a bounded family of complex numbers and let $1 \leq p,q \leq +\infty$. We say that $m$ is a Schur multiplier on $\mathcal{B}(\ell_p,\ell_q)$ if for any matrix $[a_{ij}]_{i,j \geq 1}$ in $\mathcal{B}(\ell_p,\ell_q)$, the matrix $[m_{ij}a_{ij}]_{i,j \geq 1}$ defines an element of $\mathcal{B}(\ell_p,\ell_q)$. An application of the Closed Graph theorem shows that $m$ is a Schur multiplier if and only if the mapping
\begin{equation}\label{MapSchur}
\begin{array}[t]{lrcl}
& T_m : \mathcal{B}(\ell_p,\ell_q) & \longrightarrow & \mathcal{B}(\ell_p,\ell_q) \\
&  [a_{ij}]_{i,j \geq 1} & \longmapsto & [m_{ij}a_{ij}]_{i,j \geq 1} \end{array}
\end{equation}
is bounded. By definition, the norm of the Schur multiplier $m$ is the norm of $T_m$.\\

There is a well-known characterization of Schur multipliers on $\mathcal{B}(\ell_2)$ (see for instance \cite[Theorem 5.1]{PisierBook2}) which can be extended to the case $\mathcal{B}(\ell_p)$ as follows.

\begin{theorem}\label{FactoPisier}\cite[Theorem 5.10]{PisierBook2}
Let $\phi = (c_{ij})_{i,j\in \mathbb{N}} \subset \mathbb{C}$, $C \geq 0$ be a constant and let $1\leq p < \infty$. The following are equivalent :
\begin{enumerate}
\item[(i)] $\phi$ is a Schur multiplier on $\mathcal{B}(\ell_p)$ with norm $\leq C$.
\item[(ii)] There is a measure space $(\Omega, \mu)$ and elements $(x_j)_{j\in \mathbb{N}}$ in $L^p(\mu)$ and $(y_i)_{i\in \mathbb{N}}$ in $L^{p'}(\mu)$ such that
\begin{equation*}
\forall i,j \in \mathbb{N}, \ c_{ij}=\left\langle x_j,y_i \right\rangle \ \text{and} \ \underset{i}{\sup} \|y_i\|_{p'} \ \underset{j}{\sup} \|x_j\|_p \leq C.
\end{equation*}
\end{enumerate}
\end{theorem}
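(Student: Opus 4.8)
The plan is to prove the two implications separately, the implication $(ii)\Rightarrow(i)$ being elementary and $(i)\Rightarrow(ii)$ carrying all the weight. For $(ii)\Rightarrow(i)$, I would argue directly. Fix $a=[a_{ij}]\in\mathcal B(\ell_p)$ and finitely supported $\xi=(\xi_j)\in\ell_p$, $\eta=(\eta_i)\in\ell_{p'}$. Writing $c_{ij}=\int_\Omega x_j y_i\,d\mu$ and interchanging the (finite) sums with the integral, I obtain
$$\sum_{i,j}c_{ij}a_{ij}\xi_j\eta_i=\int_\Omega\big\langle a\,[\xi_j x_j(\omega)]_j,\,[\eta_i y_i(\omega)]_i\big\rangle\,d\mu(\omega).$$
Bounding the integrand by $\|a\|\,\|[\xi_j x_j(\omega)]_j\|_{\ell_p}\,\|[\eta_i y_i(\omega)]_i\|_{\ell_{p'}}$ and then applying Hölder in $\omega$ with exponents $p,p'$, the two resulting factors compute (by Fubini again) to $(\sum_j|\xi_j|^p\|x_j\|_p^p)^{1/p}\le(\sup_j\|x_j\|_p)\|\xi\|_p$ and likewise $(\sup_i\|y_i\|_{p'})\|\eta\|_{p'}$. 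This yields $|\langle T_\phi(a)\xi,\eta\rangle|\le C\|a\|\,\|\xi\|_p\|\eta\|_{p'}$, and taking suprema over finitely supported unit vectors (legitimate by density) gives $\|T_\phi(a)\|\le C\|a\|$.

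For $(i)\Rightarrow(ii)$ I would first reduce to the finite-dimensional problem: if every $N\times N$ truncation of $\phi$ factors through some $L^p(\mu_N)$ with product of suprema $\le C$, a weak-$*$ compactness (or ultraproduct) argument assembles these into a single factorization over a limit measure space, since the relevant families $(x_j),(y_i)$ stay in fixed-radius balls. So the core is that a finite matrix with $\|T_\phi\|\le C$ factors with constant $\le C$, which I would establish by separation. Let $K_C$ be the set of finite matrices $\big(\langle x_j,y_i\rangle\big)_{i,j}$ with $\sup_j\|x_j\|_p\le1$ and $\sup_i\|y_i\|_{p'}\le C$, over all measure spaces. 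The key structural point is that $K_C$ is convex: given two such representations over $\mu,\mu'$, I place them on the disjoint union $\Omega\sqcup\Omega'$ and rescale by $\theta^{1/p},(1-\theta)^{1/p}$ on the $x$'s and by $\theta^{1/p'},(1-\theta)^{1/p'}$ on the $y$'s; the identity $\tfrac1p+\tfrac1{p'}=1$ makes the pairing reproduce $\theta\phi^{(1)}+(1-\theta)\phi^{(2)}$ while preserving both norm constraints. The set $K_C$ is also balanced, and one checks it is closed (bounding the dimension of the representing $\Omega$ and using weak-$*$ compactness of the balls).

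It then suffices to show $\phi\in K_C$. If not, Hahn--Banach produces a matrix $s=[s_{ij}]$ with $\operatorname{Re}\sum_{i,j}c_{ij}\overline{s_{ij}}>\sup_{\psi\in K_C}\operatorname{Re}\sum_{i,j}\psi_{ij}\overline{s_{ij}}$. The main obstacle, and the real content of the theorem, is to evaluate the support functional on the right and to bound the left side by $\|T_\phi\|$ times the same quantity, thereby forcing $\|T_\phi\|>C$, a contradiction. Writing the right-hand supremum as $\sup\int_\Omega y(\omega)^{T}\bar s\,x(\omega)\,d\mu$ subject to the \emph{column} constraints $\sup_j\|x_j\|_p\le1$, $\sup_i\|y_i\|_{p'}\le C$, I expect it to equal $C$ times a dual factorization norm of $s$, which must then be matched against a bound $|\langle\phi,s\rangle|\le\|T_\phi\|\cdot(\text{same norm})$ obtained by testing $T_\phi$ on a suitable operator built from $s$ through the trace duality between $\mathcal B(\ell_p)$ and its predual. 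Identifying this norm, and especially producing the single underlying measure $\mu$ shared by the $x_j$ and $y_i$, is where I anticipate the bulk of the difficulty: the simultaneous $L^p$/$L^{p'}$ (two-sided) nature of the factorization is exactly what makes a direct Pietsch-type argument delicate. An alternative route would deduce the factorization from a general theorem on operators factoring through $L^p$-spaces, at the cost of separately arranging a common measure space.
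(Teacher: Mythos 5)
Your direction $(ii)\Rightarrow(i)$ is correct and complete: the interchange of the finite sums with the integral, the pointwise bound $|\langle a\,u(\omega),v(\omega)\rangle|\le\|a\|\,\|u(\omega)\|_{\ell_p}\|v(\omega)\|_{\ell_{p'}}$, and H\"older in $\omega$ give exactly $\|T_\phi(a)\|\le C\|a\|$; this is the standard argument. The converse, however, is a strategy rather than a proof, and the gap sits exactly where the theorem lives. You correctly set up the separation framework (the set $K_C$ of matrices $\bigl(\langle x_j,y_i\rangle\bigr)_{i,j}$, its convexity via disjoint unions and the $\theta^{1/p},\theta^{1/p'}$ rescaling), but you never carry out the two steps on which everything depends. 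First, closedness of $K_C$ is asserted, not proved (and your parenthetical "bounding the dimension of the representing $\Omega$" is itself a claim that needs an argument; for $p=1$ one also has to handle weak-$*$ compactness in $L^\infty$ correctly). Second, and decisively, you do not identify the support functional of $K_C$. What is needed is the exact duality: with $\nu(s):=\inf\bigl\{\|a\|_{\mathcal B(\ell_p^n)} : s_{ij}=\beta_i a_{ij}\alpha_j,\ \|\alpha\|_{\ell_p}\le1,\ \|\beta\|_{\ell_{p'}}\le1\bigr\}$ (factorization of $s$ through diagonal multiplications), one must prove $\sup_{\psi\in K_C}|\langle\psi,s\rangle| = C\,\nu(s)$. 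The inequality $\le$ is your easy-direction computation again; the inequality $\ge$ is a bipolar/duality statement whose proof is precisely where the measure space is produced (in the finite case, as a discrete probability measure coming out of a Pietsch-type domination). Once that identity is available, the contradiction is immediate, since $|\langle\phi,s\rangle| = |\langle T_\phi(a)\alpha,\beta\rangle| \le \|T_\phi\|\,\nu(s)$; but without it the Hahn--Banach functional $s$ gives you nothing. You acknowledge this yourself ("the main obstacle, and the real content of the theorem"), so what you have is an accurate map of the proof, with its central chamber empty. The gluing of the finite-dimensional factorizations into one measure space is likewise only sketched, though that part is more routine.

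For context: the paper does not prove this statement at all; it quotes it from \cite[Theorem 5.10]{PisierBook2}. Its own contribution is a generalization (Corollary \ref{Schurfacto2}, via Theorem \ref{multipliers}) in which the two steps you are missing are not proved from scratch either, but outsourced to the Defant--Floret theory: the dual-norm identification you need is exactly the description $(\ref{formulepredual})$ of $(\ell_\infty^n\otimes_{\alpha_{p,q}}\ell_\infty^m)^*$ as operators factoring through diagonals (\cite[Theorem 19.2]{Defant}), and the existence of the single measure space is \cite[Theorem 18.11]{Defant} (Theorem \ref{Facto2} in the paper). So your plan has the right shape --- it is essentially the separation/duality proof underlying both Pisier's theorem and the paper's machinery --- but as written it contains a genuine gap rather than a complete argument.
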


Denote by $\mathcal{M}(p,q)$ the space of Schur multipliers on $\mathcal{B}(\ell_p,\ell_q)$. In \cite{Bennett}, Bennett gives some results about the inclusions between the spaces $\mathcal{M}(p,q)$. In the same paper, he also gives a necessary and sufficient condition for a family $m$ to belong to $\mathcal{M}(p,q)$, using the theory of absolutely summing operators. Theorem $\ref{FactoPisier}$ provides a different type of characterization, which is more explicit and useful.\\

Let $(\Omega_1, \mu_1)$ and $(\Omega_2, \mu_2)$ be two $\sigma$-finite measure spaces. The space $L^2(\Omega_1 \times \Omega_2)$ can be identified with the space $S^2(L^2(\Omega_1), L^2(\Omega_2))$ of Hilbert-Schmidt operators. If $J\in L^2(\Omega_1 \times \Omega_2)$, the operator
\begin{equation*}
\begin{array}[t]{lccc}
X_J : & L^2(\Omega_1) & \longrightarrow & L^2(\Omega_2) \\
& f & \longmapsto & \displaystyle \int_{\Omega_1} J(t,\cdot)f(t)\, \text{d}\mu_1(t)  \end{array}
\end{equation*}
is a Hilbert-Schmidt operator and $\|X_J\|_2=\|J\|_{L^2}$. Moreover, 
any element of $S^2(L^2(\Omega_1), L^2(\Omega_2))$ has this form.

Let $\phi\in L^{\infty}(\Omega_1\times \Omega_2)$. We may associate the operator
\begin{equation*}
\begin{array}[t]{lccc}
R_{\phi} : & S^2(L^2(\Omega_1), L^2(\Omega_2)) & \longrightarrow & S^2(L^2(\Omega_1), L^2(\Omega_2)) \\
& X_J & \longmapsto & X_{\phi J}  \end{array}
\end{equation*}
whose norm is equal to $\|\phi\|_{\infty}$.
We say that $\phi$ is a Schur multiplier on $\mathcal{B}(L^2(\Omega_1), L^2(\Omega_2))$ if $R_{\psi}$ extends to a (necessarily unique) bounded operator still denoted by
$$
R_{\phi} \colon \mathcal{K}(L^2(\Omega_1), L^2(\Omega_2)) 
\longrightarrow \mathcal{K}(L^2(\Omega_1), L^2(\Omega_2)),
$$
where $\mathcal{K}(L^2(\Omega_1), L^2(\Omega_2))$ denotes the space of compact operators from
$L^2(\Omega_1)$ into $L^2(\Omega_2)$. When $\phi$ is a Schur multiplier, the norm of $\phi$ is by definition the norm of $R_{\phi}$ as an operator from $\mathcal{K}(L^2(\Omega_1), L^2(\Omega_2))$ into itself.

A characterization similar to the one in Theorem $\ref{FactoPisier}$ holds in this setting. The following result was established by Peller \cite{PelHank}.

\begin{theorem}
Let $\phi \in L^{\infty}(\Omega_1 \times \Omega_2)$ and $C > 0$. The following are equivalent :
\begin{enumerate}
\item[(i)] $\phi$ is a Schur multiplier and $\|R_{\phi}\| < C$.
\item[(ii)] There exist families $(a_i)_{i \geq 1} \subset L^{\infty}(\Omega_1)$ and $(b_i)_{i \geq 1} \subset L^{\infty}(\Omega_2)$ such that
$$\underset{s \in \Omega_1}{\text{essup}} \sum_{i=1}^{+\infty} |a_i(s)|^2 < C, \underset{t \in \Omega_2}{\text{essup}} \sum_{i=1}^{+\infty} |b_i(t)|^2 < C$$
and for almost every $(s,t)\in \Omega_1 \times \Omega_2$,
$$\phi(s,t) = \sum_{i=1}^{+\infty} a_i(s)b_i(t).$$
\end{enumerate}
\end{theorem}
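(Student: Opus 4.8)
The plan is to prove the two implications separately, the implication $(ii)\Rightarrow(i)$ being elementary and $(i)\Rightarrow(ii)$ the substantial one. For $(ii)\Rightarrow(i)$ I would introduce the ``column'' and ``row'' operators attached to the given families: define $A\colon L^2(\Omega_1)\to L^2(\Omega_1;\ell_2)$ by $Af=(a_if)_i$ and $B\colon L^2(\Omega_2;\ell_2)\to L^2(\Omega_2)$ by $B((g_i)_i)=\sum_i b_ig_i$. The hypotheses give $\|A\|^2=\underset{s}{\text{essup}}\sum_i|a_i(s)|^2<C$ and likewise $\|B\|^2<C$. A direct kernel computation then shows that on the dense subspace $S^2(L^2(\Omega_1),L^2(\Omega_2))$ one has the factorization $R_\phi(X)=B\,(X\otimes\mathrm{id}_{\ell_2})\,A$. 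Since $\|X\otimes\mathrm{id}_{\ell_2}\|=\|X\|$, this yields $\|R_\phi(X)\|\le\|A\|\,\|B\|\,\|X\|<C\|X\|$, so $R_\phi$ extends boundedly with $\|R_\phi\|<C$; as $R_\phi(S^2)\subset S^2$ and $S^2$ is dense in $\mathcal K$, the extension carries $\mathcal K$ into $\mathcal K$.

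For $(i)\Rightarrow(ii)$ the strategy is to discretize, invoke the classical Hilbertian factorization, and pass to a limit. First I would choose increasing sequences of finite measurable partitions of $\Omega_1$ and $\Omega_2$ generating the two $\sigma$-algebras, and let $\phi_n=\mathbb E_n\phi$ be the conditional expectation of $\phi$ onto the product partition (the function constant on each product cell equal to the average of $\phi$ there). Testing $R_\phi$ against the finite-rank operators built from the normalised indicator bases of the cells, a short computation gives $\langle R_\phi(X)u_k,v_l\rangle=a_{kl}(\phi_n)_{kl}$, i.e. the compression of $R_\phi$ to these finite-dimensional subspaces is exactly the finite rectangular Schur multiplier with matrix $(\phi_n)$; hence $\|T_{\phi_n}\|_{\mathcal B(\ell_2^{k},\ell_2^{m})}\le\|R_\phi\|<C$. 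The classical rectangular Hilbertian factorization (Theorem~\ref{FactoPisier} with $p=2$, whose rectangular version follows by a block embedding) then produces, for each $n$, simple $\ell_2$-valued functions $a^{(n)},b^{(n)}$ with $\phi_n(s,t)=\langle a^{(n)}(s),b^{(n)}(t)\rangle$ and $\|a^{(n)}\|_\infty,\|b^{(n)}\|_\infty<C^{1/2}$.

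The delicate point is the passage to the limit, and this is where I expect the main difficulty. One cannot simply take weak-$*$ limits of $a^{(n)}$ and $b^{(n)}$, since the inner product is not weakly continuous and the pointwise products $\langle a^{(n)}(s),b^{(n)}(t)\rangle$ need not converge to $\langle a(s),b(t)\rangle$. To circumvent this I would work with the block Gram kernels
\[
\mathcal K_n(x,y)=\langle c^{(n)}(x),c^{(n)}(y)\rangle\qquad\text{on }(\Omega_1\sqcup\Omega_2)^2,
\]
where $c^{(n)}$ equals $a^{(n)}$ on $\Omega_1$ and $b^{(n)}$ on $\Omega_2$; each $\mathcal K_n$ is positive definite, uniformly bounded by $C$, and its off-diagonal block is $\phi_n$. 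Passing to a weak-$*$ cluster point $\mathcal K$ in $L^\infty$, positivity is preserved (it is tested against a single $L^1$ function $h\otimes\overline h$, so no product of two weak limits occurs), while the off-diagonal block converges to $\phi$ because $\phi_n\to\phi$. A Kolmogorov/GNS reconstruction applied to the bounded positive-definite kernel $\mathcal K$ then yields a Hilbert space $H$ and $c\in L^\infty(\Omega_1\sqcup\Omega_2;H)$ with $\mathcal K(x,y)=\langle c(x),c(y)\rangle$; restricting $c$ to the two pieces gives $a,b$ with $\phi(s,t)=\langle a(s),b(t)\rangle$ and $\|a\|_\infty,\|b\|_\infty\le\|R_\phi\|^{1/2}<C^{1/2}$. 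Choosing an orthonormal basis of the separable subspace of $H$ generated by the ranges of $a$ and $b$ finally converts this into the series form $\phi(s,t)=\sum_i a_i(s)b_i(t)$ with the stated essential-supremum bounds.

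I would identify the measurable reconstruction of $c$ from $\mathcal K$ as the technical heart: it is precisely the step that replaces the ill-behaved ``product of weak limits'' by the weak-$*$-stable positivity of the Gram kernels, and it requires the kernel's boundedness and measurability to produce a genuine measurable field of vectors. Alongside it sit the routine but necessary reductions to countably generated, $\sigma$-finite (hence separable) measure spaces, needed both for the martingale convergence $\phi_n\to\phi$ and for metrizability of the weak-$*$ topology on the relevant balls.
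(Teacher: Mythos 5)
First, a point of comparison that matters here: the paper does not prove this statement at all. It is imported from Peller \cite{PelHank} (with \cite{Spronk} cited for the measurable setting), so there is no internal proof to measure your argument against; I am judging it on its own terms.

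Your implication (ii)$\Rightarrow$(i) is complete and correct: with $A$ and $B$ as you define them, the kernel computation $B(X_J\otimes\mathrm{id}_{\ell_2})Af=X_{\phi J}f$ is legitimate (the interchange of sum and integral is dominated by $C|J(s,t)f(s)|$), and density of $S^2$ in $\mathcal{K}(L^2(\Omega_1),L^2(\Omega_2))$ finishes it. In (i)$\Rightarrow$(ii), the discretization and the uniform bound $\|T_{\phi_n}\|\le\|R_\phi\|$ are exactly the $p=q=2$ instance of the paper's Proposition \ref{discret}; the rectangular form of Theorem \ref{FactoPisier} does follow by the block embedding you mention; and your diagnosis of the product-of-weak-limits obstruction, together with the Gram-kernel device to evade it, is sound: integrated positive definiteness and the $L^\infty$ bound both pass to weak-$*$ cluster points, and the off-diagonal block of any cluster point is $\phi$.

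The one genuine gap is the step you yourself call the technical heart: the ``Kolmogorov/GNS reconstruction.'' As invoked, it is not an off-the-shelf theorem. The classical Kolmogorov decomposition requires a kernel that is positive definite \emph{pointwise} and yields no measurability of the vector field, whereas your $\mathcal{K}$ is an equivalence class in $L^\infty$ which is positive definite only in the integrated sense; its diagonal is not even well defined, so the quantity that should bound $\|c(x)\|^2$ needs to be identified. This lemma is true, and it can be proved inside your own framework, which is why I consider the proposal viable rather than broken. Write $\Omega=\Omega_1\sqcup\Omega_2$, let $(\mathcal{P}_n)$ be your increasing generating partitions (cells of finite positive measure), let $H$ be the Hilbert space obtained from simple functions under the semi-inner product $\langle h,g\rangle_{\mathcal{K}}=\int\int\mathcal{K}(x,y)h(x)\overline{g(y)}\,d\mu\,d\mu$ (positivity is exactly your weak-$*$-stable condition), and set $c_n(x)=[\chi_A/\mu(A)]\in H$ for $x\in A\in\mathcal{P}_n$. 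Then $\langle c_n(x),c_n(y)\rangle_H=(\mathbb{E}_n\mathcal{K})(x,y)$ and $\|c_n(x)\|^2=\mu(A)^{-2}\int_{A\times A}\mathcal{K}\le\|\mathcal{K}\|_\infty$; moreover $(c_n)$ is an $H$-valued martingale for the filtration generated by the partitions, uniformly bounded, so by the Radon--Nikodym property of Hilbert spaces it converges almost everywhere to a measurable, essentially bounded, separably valued $c$, and $\langle c(x),c(y)\rangle=\lim_n(\mathbb{E}_n\mathcal{K})(x,y)=\mathcal{K}(x,y)$ a.e. This argument also supplies the norm bound you asserted without justification: after normalizing the finite factorizations so that $\sup_j\|x_j^{(n)}\|=\sup_i\|y_i^{(n)}\|=\|T_{\phi_n}\|^{1/2}$, one gets $\|\mathcal{K}_n\|_\infty\le\|R_\phi\|$, hence $\|\mathcal{K}\|_\infty\le\|R_\phi\|<C$ by weak-$*$ lower semicontinuity, and the construction gives $\mathrm{essup}\,\|c\|^2\le\|\mathcal{K}\|_\infty<C$; expanding $c$ in an orthonormal basis of the closed span of its range then yields the series form with the strict bounds of (ii). With this lemma supplied, your proof is correct.
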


\noindent See also \cite{Spronk} for another formulation of this theorem and results about Schur multipliers in the measurable case.\\

In this article, we define more generally Schur multipliers on $\mathcal{B}(L^p(\Omega_1), L^q(\Omega_2))$ for some measure spaces $(\Omega_1,\mu_1)$ and $(\Omega_2, \mu_2)$. To any $\phi \in L^{\infty}(\Omega_1, \Omega_2)$, we associate a linear mapping
$$T_{\phi} : L^{p'}(\Omega_1) \otimes L^q(\Omega_2) \rightarrow L^{p'}(\Omega_1) \overset{\vee}{\otimes} L^q(\Omega_2)$$
and we say that $\phi$ is a Schur multiplier if $T_{\phi}$ is bounded. When $\Omega_1 = \Omega_2=\mathbb{N}$ with the counting measures, $T_{\phi}$ corresponds to $(\ref{MapSchur})$.\\
In the case $1 \leq q \leq p \leq +\infty$, we characterize the elements of $L^{\infty}(\Omega_1 \times \Omega_2)$ which are Schur multipliers on $\mathcal{B}(L^p(\Omega_1), L^q(\Omega_2))$. We prove that if $1 < q \leq p < +\infty$, $\phi$ is a Schur multiplier if and only if there are a measure space (a probability space when $p\neq q$) $(\Omega,\mu)$, $a\in L^{\infty}(\mu_1, L^{p}(\mu))$ and $b\in L^{\infty}(\mu_2, L^{q'}(\mu))$ such that, for almost every $(s,t) \in \Omega_1  \times \Omega_2$,
$$\phi(s,t)=\left\langle a(s), b(t) \right\rangle,$$
where $L^{\infty}(\mu_1, L^{r}(\mu))$ is the Bochner space valued in $L^r(\mu)$.\\
This result is new, even in the setting of classical Schur multipliers on $\mathcal{B}(\ell_p,\ell_q)$, and is of different nature than the characterization of Bennett. As a consequence, we give in the last section of this article new results of comparisons for the spaces $\mathcal{M}(p,q)$.

\subsection{Notations}\label{Notations}

Let $X$ and $Y$ be Banach spaces. \\

If $z\in X \otimes Y$, the projective tensor norm of $z$ is defined by
$$
\|z\|_{\wedge} := \inf \left\lbrace \sum \|x_i\| \|y_i\| \right\rbrace,
$$
where the infimum runs over all finite families $(x_i)_i$ in $X$ and $(y_i)_i$ in $Y$ such 
that 
$$
z=\sum_i x_i\otimes y_i. 
$$
The completion $X \overset{\wedge}{\otimes} Y$ of $(X\otimes Y,\|.\|_{\wedge})$
is called the projective tensor product of $X$ and $Y$. Note that the projective tensor product is commutative, that is $X \overset{\wedge}{\otimes} Y = Y \overset{\wedge}{\otimes} X$.

The mapping taking any functional $\omega\colon X\otimes Y\to\mathbb{C}$
to the operator $u\colon X\to Y^*$ defined by $\langle u(x),
y\rangle=\omega(x\otimes y)$ for any $x\in X, y\in Y$,
induces an isometric identification
\begin{equation}\label{dualproj}
(X \overset{\wedge}{\otimes} Y)^*=\mathcal{B}(X,Y^*).
\end{equation}
We refer to \cite[Chapter 8, Corollary 2]{DU} for this fact.

Let $(\Omega,\mu)$ be a localizable measure space and let $L^p(\Omega;Y)$ denote the
Bochner space of $p-$integrable functions from $\Omega$ into $Y$.
By \cite[Chapter 8, Example 10]{DU}, the natural embedding $L^1(\Omega)
\otimes Y\subset L^1(\Omega;Y)$
extends to an isometric isomorphism
\begin{equation}\label{L1tensor}
L^1(\Omega;Y) = L^1(\Omega)\overset{\wedge}{\otimes}Y.
\end{equation}
By (\ref{dualproj}), this implies
\begin{equation}\label{L1tensorcor}
L^1(\Omega;Y)^*=\mathcal{B}(L^1(\Omega),Y^*).
\end{equation}
Assume that $Y^*$ has the Radon-Nikodym property (in short, $Y^*$ has RNP). In this case,
$$L^1(\Omega, Y)^* = L^{\infty}(\Omega, Y^*).$$
The latter implies that 
\begin{equation}\label{Linftensorcor}
L^{\infty}(\Omega, Y^*) = \mathcal{B}(L^1(\Omega),Y^*),
\end{equation}
and the isometric isomorphism is given by
\begin{equation*}
\begin{array}[t]{lrcl}
& L^{\infty}(\Omega, Y^*) & \longrightarrow & \mathcal{B}(L^1(\Omega),Y^*). \\
&   g & \longmapsto & \left[f \in L^1(\Omega) \mapsto \displaystyle{\int_{\Omega} f(t)g(t) \text{d}\mu(t)} \right] \end{array}
\end{equation*}
Assume now that $Y=L^1(\Omega')$ where $(\Omega', \mu')$ is a localizable measure space. Then, an application of Fubini Theorem gives
$$L^1(\Omega, L^1(\Omega'))=L^1(\Omega \times \Omega').$$
Using equality $(\ref{L1tensor})$, we deduce that
\begin{equation}\label{LinfB}
\mathcal{B}(L^1(\Omega), L^{\infty}(\Omega')) = L^{\infty}(\Omega \times \Omega'), 
\end{equation}
and the correspondence is given by
\begin{equation*}
\begin{array}[t]{lrcl}
& L^{\infty}(\Omega \times \Omega') & \longrightarrow & \mathcal{B}(L^1(\Omega), L^{\infty}(\Omega')). \\
&   \psi & \longmapsto & \left[f \in L^1(\Omega) \mapsto \displaystyle{\int_{\Omega} f(t)\psi(t, \cdot) \text{d}\mu(t)} \right] \end{array}
\end{equation*}
For $\psi \in L^{\infty}(\Omega \times \Omega')$, denote by $u_{\psi}$ the corresponding element of $\mathcal{B}(L^1(\Omega), L^{\infty}(\Omega'))$.\\

If $z = \sum_i x_i \otimes y_i \in X \otimes Y$, $x^* \in X^*$ and $y^* \in Y^*$, we write
$$\left\langle z , x^* \otimes y^* \right\rangle = \sum_i x^*(x_i)y^*(y_i).$$
Then, the injective tensor norm of $z \in X \otimes Y$ is given by
$$\|z\|_{\vee} = \underset{\|x^*\| \leq 1, \|y^*\| \leq 1}{\sup} |\left\langle z , x^* \otimes y^* \right\rangle|.$$
The completion $X \overset{\vee}{\otimes} Y$ of $(X\otimes Y,\|.\|_{\vee})$ is called the injective tensor product of $X$ and $Y$.\\

In this paper, we will often identify $X^*\otimes Y$ with the finite rank operators from $X$ into $Y$ as follow. If $u=\sum_i x_i^* \otimes y_i\in X^*\otimes Y$, we define $\tilde{u} : X \rightarrow Y$ by
\begin{equation}\label{tensorop}
\tilde{u}(x) = \sum_i x_i^*(x) y_i, \forall x \in X.
\end{equation}
Then, it is easy to check that $\|u\|_{\vee} = \| \tilde{u}\|_{\mathcal{B}(X, Y)}$.

Moreover, if $Y$ has the approximation property (see e.g. \cite{DiesFourBook} for the definition), \cite[Theorem 1.4.21]{DiesFourBook} gives the isometric identification
$$X^* \overset{\vee}{\otimes} Y = \mathcal{K}(X,Y)$$
where $\mathcal{K}(X,Y)$ denotes the space of compact operators from $X$ into $Y$.

Let $(\Omega_1, \mathcal{F}_1, \mu_1)$ and $(\Omega_2, \mathcal{F}_2, \mu_2)$ be two localizable measure spaces. Let $1\leq p < \infty$ and $1 \leq q \leq \infty$. Then $L^q(\Omega_2)$ has the approximation property so that we have
\begin{equation}\label{injcompact}
L^{p'}(\Omega_1) \overset{\vee}{\otimes} L^q(\Omega_2)=\mathcal{K}(L^p(\Omega_1),L^q(\Omega_2)).
\end{equation}

\noindent Finally, if we assume that $1 < p,q < +\infty$, then by \cite[Theorem 2.5]{DiesFourArt} and $(\ref{dualproj})$, 
\begin{equation}\label{bidualcomp}
(L^{p'}(\Omega_1) \overset{\vee}{\otimes} L^q(\Omega_2))^{**}
 = (L^p(\Omega_1) \overset{\wedge}{\otimes} L^{q'}(\Omega_2))^* 
 =\mathcal{B}(L^p(\Omega_1),L^q(\Omega_2)).
\end{equation}

\vspace{0.3cm}

\section{Definition of Schur multipliers on $\mathcal{B}(L^p,L^q)$}\label{first}

Let $(\Omega_1, \mathcal{F}_1, \mu_1)$ and $(\Omega_2, \mathcal{F}_2, \mu_2)$ be two localizable measure spaces and let $\phi\in L^{\infty}(\Omega_1 \times \Omega_2)$. Let $1 \leq p,q \leq \infty$ and denote by $p'$ and $q'$ their conjugate exponents.\\
Let
$$T_{\phi} : L^{p'}(\Omega_1) \otimes L^q(\Omega_2) \rightarrow \mathcal{B}(L^p(\Omega_1),L^q(\Omega_2))$$
be defined for any elementary tensor $f\otimes g\in L^{p'}(\Omega_1) \otimes L^q(\Omega_2)$ by
$$[T_{\phi}(f \otimes g)](h)= \left( \int_{\Omega_1} \phi(s,\cdot)f(s)h(s) \text{d}\mu_1(s) \right) g(\cdot) \in L^q(\Omega_2),$$
for all $h\in L^p(\Omega_1)$.\\

\noindent We have an inclusion
$$L^{p'}(\Omega_1) \otimes L^q(\Omega_2) \subset L^{p'}(\Omega_1, L^q(\Omega_2))$$
given by $f \otimes g \mapsto [s \in \Omega_1 \mapsto f(s)g]$. Under this identification, $T_{\phi}$ is the multiplication by $\phi$. Note that $L^{p'}(\Omega_1, L^q(\Omega_2))$ is invariant by multiplication by an element of $L^{\infty}(\Omega_1 \times \Omega_2)$ and that we have a contractive inclusion
$$L^{p'}(\Omega_1, L^q(\Omega_2)) \subset L^{p'}(\Omega_1) \overset{\vee}{\otimes} L^q(\Omega_2).$$
Therefore, $T_{\phi}$ is valued is in $L^{p'}(\Omega_1) \overset{\vee}{\otimes} L^q(\Omega_2)$. Using the identification
$$L^{p'}(\Omega_1) \overset{\vee}{\otimes} L^q(\Omega_2) \subset \mathcal{B}(L^p(\Omega_1), L^q(\Omega_2))$$
given by $(\ref{tensorop})$, we deduce that the elements of $L^{p'}(\Omega_1) \overset{\vee}{\otimes} L^q(\Omega_2)$ are compact operators as limits of finite rank operators for the operator norm.

\begin{definition}
We say that $\phi$ is a Schur multiplier on $\mathcal{B}(L^p(\Omega_1), L^q(\Omega_2))$ if there exists a constant $C \geq 0$ such that for all $u\in L^{p'}(\Omega_1) \otimes L^q(\Omega_2)$,
$$\|T_{\phi}(u)\|_{\mathcal{B}(L^p(\Omega_1), L^q(\Omega_2))} \leq \|u\|_{\vee},$$
that is, if $T_{\phi}$ extends to a bounded operator
$$T_{\phi} : L^{p'}(\Omega_1) \overset{\vee}{\otimes} L^q(\Omega_2) \rightarrow L^{p'}(\Omega_1) \overset{\vee}{\otimes} L^q(\Omega_2).$$
In this case, the norm of $\phi$ is by definition the norm of $T_{\phi}$.
\end{definition}

\begin{remark}\label{simplef}
By $\mathcal{E}_1$ (resp. $\mathcal{E}_2$) we denote the space of simple functions on $\Omega_1$ (resp. $\Omega_2$). By density of $\mathcal{E}_1 \otimes \mathcal{E}_2$ in $L^{p'}(\Omega_1) \overset{\vee}{\otimes} L^q(\Omega_2)$, $T_{\phi}$ extends to a bounded operator from $L^{p'}(\Omega_1) \overset{\vee}{\otimes} L^q(\Omega_2)$ into itself if and only if it is bounded on $\mathcal{E}_1 \otimes \mathcal{E}_2$ equipped with the injective tensor norm.
\end{remark}

\noindent Assume that $1 < p,q < +\infty$. By $(\ref{injcompact})$ we have
$$L^{p'}(\Omega_1) \overset{\vee}{\otimes} L^q(\Omega_2)=\mathcal{K}(L^p(\Omega_1),L^q(\Omega_2)),$$
so that $\phi$ is a Schur multiplier on $\mathcal{B}(L^p(\Omega_1), L^q(\Omega_2))$ if and only if $T_{\phi}$ extends to a bounded operator
$$T_{\phi} : \mathcal{K}(L^p(\Omega_1),L^q(\Omega_2)) \rightarrow \mathcal{K}(L^p(\Omega_1),L^q(\Omega_2)).$$
In this case, considering the bi-adjoint of $T_{\phi}$, we obtain by $(\ref{bidualcomp})$ a $w^*-$continuous mapping
$$\tilde{T_{\phi}} : \mathcal{B}(L^p(\Omega_1),,L^q(\Omega_2)) \rightarrow \mathcal{B}(L^p(\Omega_1),L^q(\Omega_2))$$
which extends $T_{\phi}$. This explains the terminology '$\phi$ is a Schur multiplier on $\mathcal{B}(L^p(\Omega_1),L^q(\Omega_2))$'.\\

\noindent \textbf{Classical Schur multipliers :} Assume that $\Omega_1=\Omega_2=\mathbb{N}$ and that $\mu_1$ and $\mu_2$ are the counting measures. An element $\phi \in L^{\infty}(\mathbb{N}^2)$ is given by a family $c=(c_{ij})_{i,j\in \mathbb{N}}$ of complex numbers, where $c_{ij}=\phi(j,i)$. In this situation, the mapping $T_{\phi}$ is nothing but the classical Schur multiplier
$$A=[a_{ij}]_{i,j \geq 1} \in \mathcal{B}(\ell_p,\ell_q) \longmapsto [c_{ij}a_{ij}]_{i,j \geq 1} .$$
When this mapping is bounded from $\mathcal{B}(\ell_p,\ell_q)$ into itself, we will denote it by $T_c$.\\

\noindent \textbf{Notations :} If $(\Omega, \mathcal{F}, \mu)$ is a measure space and $n\in \mathbb{N}^*$, we denote by $\mathcal{A}_{n, \Omega}$ the collection of $n-$tuples $(A_1, \ldots, A_n)$ of pairwise disjoint elements of $\mathcal{F}$ such that
$$\text{for all} \ 1\leq i \leq n, 0 < \mu(A_i) < + \infty.$$
If $A=(A_1, \ldots, A_n) \in \mathcal{A}_{n, \Omega}$ and $1\leq p\leq +\infty$, denote by $S_{A, p}$ the subspace of $L^p(\Omega)$ generated by $\chi_{A_1}, \ldots, \chi_{A_n}$. Then $S_{A, p}$ is $1-$complemented in $L^p(\Omega)$, and a norm one projection from $L^p(\Omega)$ into $S_{A, p}$ is given by the conditional expectation
\begin{equation}\label{conexp}
\begin{array}[t]{lrcl}
P_{A, p} : & L^p(\Omega) & \longrightarrow & L^p(\Omega). \\
& f & \longmapsto & \displaystyle{\sum_{i=1}^n} \ \dfrac{1}{\mu(A_i)} \left( \int_{A_i}f \right) \chi_{A_i} \end{array}
\end{equation}
Note that the mapping
\begin{equation}\label{isolp}
\begin{array}[t]{lrcl}
\varphi_{A, p} : & S_{A, p} & \longrightarrow & \ell_p^n. \\
&  f=\sum_i a_i \chi_{A_i} & \longmapsto & (a_i (\mu_1(A_i))^{1/p})_{i=1}^n \end{array}
\end{equation}
is an isometric isomorphism between $S_{A, p}$ and $\ell_p^n$.

\begin{proposition}\label{discret}
Let $(\Omega_1, \mathcal{F}_1, \mu_1)$ and $(\Omega_2, \mathcal{F}_2, \mu_2)$ be two measure spaces and let $\phi\in L^{\infty}(\Omega_1 \times \Omega_2)$. The following are equivalent :
\begin{enumerate}
\item[(i)] $\phi$ is a Schur multiplier on $\mathcal{B}(L^p(\Omega_1), L^q(\Omega_2))$.
\item[(ii)] For all $n,m \in \mathbb{N}^*$, for all $A=(A_1,\ldots,A_n) \in \mathcal{A}_{n,\Omega_1}, B=(B_1,\ldots,B_m) \in \mathcal{A}_{m,\Omega_2}$, write
$$\phi_{ij} = \dfrac{1}{\mu_1(A_j)\mu_2(B_i)} \displaystyle \int_{A_j \times B_i} \phi \ \text{d}\mu_1 \text{d}\mu_2.$$
Then the Schur multipliers on $\mathcal{B}(\ell_p^n, \ell_q^m)$ associated with the families $\phi_{A,B}=(\phi_{ij})$ are uniformly bounded with respect to $n,m, A$ and $B$. 
\end{enumerate}
In this case, $\|T_{\phi}\| = sup_{n,m, A, B} \|T_{\phi_{A,B}}\| < +\infty$.
\end{proposition}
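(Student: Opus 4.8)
The plan is to prove both implications by comparing $T_{\phi}$, on finite-dimensional ``rectangular'' pieces, with the discrete multipliers $T_{\phi_{A,B}}$, the whole argument resting on a single computation. Fix $A=(A_1,\dots,A_n)\in\mathcal{A}_{n,\Omega_1}$ and $B=(B_1,\dots,B_m)\in\mathcal{A}_{m,\Omega_2}$, and recall the norm-one conditional expectations $P_{A,p'},P_{B,q}$ of $(\ref{conexp})$ and the isometries $\varphi_{A,p'},\varphi_{B,q}$ of $(\ref{isolp})$. I would first record the two structural facts about the injective tensor norm that drive everything: it is injective, so $S_{A,p'}\overset{\vee}{\otimes}S_{B,q}$ sits isometrically inside $L^{p'}(\Omega_1)\overset{\vee}{\otimes}L^q(\Omega_2)$, and it has the metric mapping property, so $P_{A,p'}\otimes P_{B,q}$ is a norm-one projection onto that subspace while $\varphi_{A,p'}\otimes\varphi_{B,q}$ is an isometric isomorphism onto $\ell_{p'}^n\overset{\vee}{\otimes}\ell_q^m=\mathcal{B}(\ell_p^n,\ell_q^m)$. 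The key identity I would establish is that
\[
(\varphi_{A,p'}\otimes\varphi_{B,q})\circ(P_{A,p'}\otimes P_{B,q})\circ T_{\phi}\circ(\varphi_{A,p'}\otimes\varphi_{B,q})^{-1}=T_{\phi_{A,B}}
\]
on $\ell_{p'}^n\overset{\vee}{\otimes}\ell_q^m$. To check it, I would evaluate the left-hand side on the elementary tensors $\chi_{A_j}\otimes\chi_{B_i}$. Using that under $(\ref{tensorop})$ the action of $v\otimes w$ on an operator $\tilde{u}$ is $w\circ\tilde{u}\circ v^*$, together with $(P_{A,p'})^*=P_{A,p}$ (a direct verification from $(\ref{conexp})$), the computation collapses---because the $A_j$ and the $B_i$ are pairwise disjoint---to $\phi_{ij}\,\chi_{A_j}\otimes\chi_{B_i}$, i.e. to multiplication of the matrix unit $E_{ij}$ by $\phi_{ij}$; the normalizing factors $\mu_1(A_j)^{1/p'}\mu_1(A_j)^{1/p}=\mu_1(A_j)$ and $\mu_2(B_i)^{1/q}\mu_2(B_i)^{1/q'}=\mu_2(B_i)$ cancel exactly, even at the endpoints.

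Granting this identity, the implication (i)$\Rightarrow$(ii) is immediate: $T_{\phi_{A,B}}$ is written as a composition of $T_{\phi}$ with maps that are all contractive (two isometries, an isometric inclusion, and a norm-one projection), so $\|T_{\phi_{A,B}}\|\le\|T_{\phi}\|$ for every $A,B$, whence $\sup_{n,m,A,B}\|T_{\phi_{A,B}}\|\le\|T_{\phi}\|$.

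For (ii)$\Rightarrow$(i), set $M=\sup_{n,m,A,B}\|T_{\phi_{A,B}}\|$. By Remark~$\ref{simplef}$ it suffices to prove $\|T_{\phi}(u)\|_{\vee}\le M\|u\|_{\vee}$ for $u\in\mathcal{E}_1\otimes\mathcal{E}_2$, say $u\in S_{A,p'}\otimes S_{B,q}$. I would compute $\|T_{\phi}(u)\|_{\vee}=\|\widetilde{T_{\phi}(u)}\|_{\mathcal{B}(L^p,L^q)}$ by testing the pairing $\langle\widetilde{T_{\phi}(u)}\,\xi,\eta\rangle$ against elementary tensors $\xi\otimes\eta$ with $\xi\in L^p(\Omega_1)$ and $\eta\in (L^q(\Omega_2))^*$ \emph{simple} functions, which are norming in every case $1\le p,q\le\infty$ (simple functions are dense in $L^p$ for $p<\infty$ and uniformly dense in $L^{\infty}$, and simple functions in $(L^q)^*$ norm the $L^q$-norm by dominated convergence). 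Given such $\xi,\eta$, I would pass to a common refinement $A'$ of $A$ and the level sets of $\xi$, and $B'$ of $B$ and the level sets of $\eta$, so that $u\in S_{A',p'}\otimes S_{B',q}$, $\xi\in S_{A',p}$ and $\eta\in S_{B',q'}$. A direct computation of the pairing then gives $\langle\widetilde{T_{\phi}(u)}\,\xi,\eta\rangle=\langle T_{\phi_{A',B'}}(\hat{u})\,\hat{\xi},\hat{\eta}\rangle$, where $\hat{u},\hat{\xi},\hat{\eta}$ are the images under the relevant isometries $\varphi_{A',\cdot},\varphi_{B',\cdot}$, the two sides agreeing thanks to the same exponent bookkeeping $\mu_1(A'_l)^{1/p'+1/p}\mu_2(B'_k)^{1/q+1/q'}=\mu_1(A'_l)\mu_2(B'_k)$. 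Bounding the right-hand side by $\|T_{\phi_{A',B'}}\|\,\|\hat{u}\|_{\vee}\,\|\hat{\xi}\|_p\,\|\hat{\eta}\|_{q'}\le M\|u\|_{\vee}\|\xi\|_p\|\eta\|_{q'}$ and taking the supremum over $\xi,\eta$ yields $\|T_{\phi}(u)\|_{\vee}\le M\|u\|_{\vee}$, hence $\|T_{\phi}\|\le M$. Combined with the first inequality, this gives the announced equality $\|T_{\phi}\|=\sup_{n,m,A,B}\|T_{\phi_{A,B}}\|$.

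The two places I expect to require the most care are the central identity and the norming step. In the identity, the delicate point is tracking the adjoint $(P_{A,p'})^*=P_{A,p}$ and the interplay of the two different isometries $\varphi_{A,p'}$ and $\varphi_{A,p}$ (resp. $\varphi_{B,q}$ and $\varphi_{B,q'}$), so that all measure-normalizations cancel; this is precisely where the conjugate exponents must line up. In the norming step, the genuine obstacle is the endpoints $p\in\{1,\infty\}$ or $q\in\{1,\infty\}$, where $L^{p'}$ or $(L^q)^*$ need not be a well-behaved dual; the remedy is to avoid any global dual identification and instead argue, function by function, that elementary tensors of simple functions already norm the operator $\widetilde{T_{\phi}(u)}$, which keeps the whole argument valid for the full range $1\le p,q\le\infty$.
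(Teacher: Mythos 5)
Your proposal is correct and takes essentially the same approach as the paper: your key identity is precisely the paper's computation that the compression $\psi_{B,q}\circ P_{B,q}\circ T_{\phi}(\tilde{c})_{|S_{A,p}}\circ\varphi_{A,p}^{-1}$ equals $T_{\phi_{A,B}}(c)$, rewritten at the tensor level via the norm-one projection $P_{A,p'}\otimes P_{B,q}$ and the isometries $\varphi_{A,p'}\otimes\varphi_{B,q}$. The converse direction (reduction to $\mathcal{E}_1\otimes\mathcal{E}_2$ by Remark~\ref{simplef}, passing to a common refinement, and identifying the pairing $\langle [T_{\phi}(u)]\xi,\eta\rangle$ with the discrete pairing against simple norming functions) likewise matches the paper's argument step for step.
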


\begin{proof}
$(i) \Rightarrow (ii)$. Assume first that $\phi$ is a Schur multiplier on $\mathcal{B}(L^p(\Omega_1), L^q(\Omega_2))$ with $\|T_{\phi}\| \leq 1$. Let $n,m \in \mathbb{N}^*, A=(A_1,\ldots,A_n) \in \mathcal{A}_{n,\Omega_1}$ and $B=(B_1,\ldots,B_m) \in \mathcal{A}_{m,\Omega_2}$. Let $c = \sum_{i,j} c(i,j) e_j \otimes e_i \in \ell_{p'}^n \otimes \ell_q^m \simeq \mathcal{B}(\ell_p^n, \ell_q^m)$.\\
Let $\varphi_{A,p} : S_{A, p} \rightarrow \ell_p^n$ and $\psi_{B,q} : S_{B, q} \rightarrow \ell_q^m$ be the isometries defined in $(\ref{isolp})$. Then $\tilde{c} := \psi_{B,q}^{-1} \circ c \circ \varphi_{A,p} : S_{A, p} \rightarrow S_{B, q}$ satisfies $\|\tilde{c}\| = \|c\|$ and we have
\begin{align*}
\tilde{c}
& = \sum_{i,j} \dfrac{c(i,j)}{\mu_1(A_j)^{1/p'}\mu_2(B_i)^{1/q}} \chi_{A_j} \otimes \chi_{B_i}\\
& := \sum_{i,j} \tilde{c}(i,j) \chi_{A_j} \otimes \chi_{B_i},
\end{align*}
where $\tilde{c}(i,j) = \dfrac{c(i,j)}{\mu_1(A_j)^{1/p'}\mu_2(B_i)^{1/q}}$.\\
The operator $u := \psi_{B,q} \circ P_{B, q} \circ T_{\phi}(\tilde{c})_{| S_{A, p}} \circ \varphi_{A,p}^{-1} : \ell_p^n \rightarrow \ell_q^m$ satisfies
$$ \| u\| \leq \| T_{\phi}(\tilde{c}) \|$$
and by assumption
$$\|T_{\phi}(\tilde{c})\| \leq \|\tilde{c}\|$$
so that
\begin{equation}\label{equa1}
\|u\| \leq \|\tilde{c}\| = \|c\|.
\end{equation}
Let us prove that $u = T_{\phi_{A,B}}(c)$ where $T_{\phi_{A,B}}$ is the Schur multiplier associated with the family $(\phi_{ij})$.\\
Write $u(i,j):=\psi_{B,q} \circ P_{B, q} \circ T_{\phi}(\chi_{A_j} \otimes \chi_{B_i})_{| S_{A, p}} \circ \varphi_{A,p}^{-1}$. We have
$$ u = \sum_{i,j} \tilde{c}(i,j) u(i,j).$$
Let $1 \leq k \leq n$.
\begin{align*}
[u(i,j)](e_k)
& = [\psi_{B,q} \circ P_{B, q} \circ T_{\phi}(\chi_{A_j} \otimes \chi_{B_i})_{| S_{A,p}}]\left(\dfrac{1}{\mu_1(A_k)^{1/p}} \chi_{A_k}\right)\\
& = \dfrac{1}{\mu_1(A_k)^{1/p}}  [\psi_{B,q} \circ P_{B, q}]\left( \chi_{B_i}(\cdot) \int_{\Omega_1} \phi(s,\cdot) \chi_{A_j}(s)\chi_{A_k}(s) \text{d}\mu_1(s)\right)
\end{align*}
so that $[u(i,j)](e_k)=0$ if $k\neq j$ and if $k = j$ then
\begin{align*}
[u(i,j)](e_k)
& = \dfrac{1}{\mu_1(A_k)^{1/p}} [\psi_{B,q} \circ P_{B, q}]\left(\chi_{B_i}(\cdot) \int_{A_j} \phi(s,\cdot) \text{d}\mu_1(s) \right)\\
& = \dfrac{1}{\mu_1(A_k)^{1/p}\mu_2(B_i)} \left( \int_{A_j \times B_i} \phi \right) \psi_q(\chi_{B_i})\\
& = \dfrac{1}{\mu_1(A_k)^{1/p}\mu_2(B_i)^{1/q'}} \left( \int_{A_j \times B_i} \phi \right) e_i
\end{align*}
It follows that
\begin{align*}
u
& = \sum_{i,j} \dfrac{c(i,j)}{\mu_1(A_j)^{1/p'}\mu_2(B_i)^{1/q}}\dfrac{1}{\mu_1(A_j)^{1/p}\mu_2(B_i)^{1/q'}} \left( \int_{A_j \times B_i} \phi \right) e_j \otimes e_i\\
& = \sum_{i,j} \dfrac{c(i,j)}{\mu_1(A_j)\mu_2(B_i)} \left( \int_{A_j \times B_i} \phi \right) e_j \otimes e_i\\
& = \sum_{i,j} \phi_{ij} c(i,j) e_j \otimes e_i
\end{align*}
that is, $u=T_{\phi_{A,B}}(c)$. We conclude thanks to the inequality $(\ref{equa1})$.\\

$(ii) \Rightarrow (i)$. Assume now that the assertion $(ii)$ is satisfied and show that $\phi$ is a Schur multiplier. By Remark $\ref{simplef}$, we just need to show that $T_{\phi}$ is bounded on $\mathcal{E}_1 \otimes \mathcal{E}_2$. Let $v \in \mathcal{E}_1 \otimes \mathcal{E}_2$ and write $\alpha=sup_{n,m, A, B} \|T_c\|$. We will show that $\|T_{\phi}(v)\| \leq \alpha \|v\|$. By density, it is enough to prove that for any $h_1\in \mathcal{E}_1, h_2 \in \mathcal{E}_2$,
\begin{equation}\label{equa2}
|\left\langle [T_{\phi}(v)](h_1), h_2 \right\rangle_{L^q, L^{q'}} | \leq \alpha \|v\|_{\mathcal{B}(L^p(\Omega_1), L^q(\Omega_2))} \|h_1\|_{L^p(\Omega_1)}\|h_2\|_{L^{q'}(\Omega_2)}.
\end{equation}
By assumption, there exist $n,m\in \mathbb{N}^*, A=(A_1,\ldots,A_n) \in \mathcal{A}_{n,\Omega_1}, B=(B_1,\ldots,B_m) \in \mathcal{A}_{m,\Omega_2}$ and complex numbers $v(i,j), a_i, b_j$ such that
$$v = \sum_{i,j} v(i,j) \chi_{A_j}\otimes \chi_{B_i}, h_1 = \sum_j a_j \chi_{A_j} \ \text{and} \ h_2=\sum_i b_i \chi_{B_i}.$$
Equation $(\ref{equa2})$ can be rewritten as
\begin{equation}\label{equa3}
\left| \sum_{i,j} v(i,j)a_jb_i \left( \int_{A_j \times B_i} \phi \right) \right| \leq \alpha \|v\| \|h_1\|_{L^p(\Omega_1)}\|h_2\|_{L^{q'}(\Omega_2)}.
\end{equation}
Consider $\tilde{v}:= \psi_{B,q} \circ v \circ \varphi_{A,p}^{-1} : \ell_p^n \rightarrow \ell_q^m$ and $z := \psi_{B,q} \circ P_{B, q} \circ T_{\phi}(v)_{| S_{A, p}} \circ \phi_{A,p}^{-1} : \ell_p^n \rightarrow \ell_q^m$. The computations made in the first part of the proof show that $z = T_m(\tilde{v})$ where $m$ is the family $(\phi_{ij})$.\\
Now, let $x:= \varphi_{A,p}(h_1)$ and $y:=\psi_{B,q'}(h_2)$. Since $T_m$ is bounded with norm smaller than $\alpha$ we have
\begin{equation}\label{equa4}
| \left\langle [T_m(\tilde{c})](x),y \right\rangle_{\ell_q^m, \ell_{q'}^m} | \leq \alpha \|\tilde{c}\|_{\mathcal{B}(\ell_p^n,\ell_q^m)} \|x\|_{\ell_p^n} \|y\|_{\ell_{q'}^m}.
\end{equation}
An easy computation shows that the left-hand side on this equality is nothing but the left-hand side of the inequality $(\ref{equa3})$. Finally, the right-hand side of the inequalities $(\ref{equa3})$ and $(\ref{equa4})$ are equal, which concludes the proof.

\end{proof}

\vspace{0.3cm}

\section{$(p,q)-$Factorable operators}

\noindent Let $X$ and $Y$ be Banach spaces.

\subsection{Dual norm.} \cite[Chapter 15]{Defant}. Let $M\subset X$ and $N\subset Y$ be finite dimensional subspaces (in short, f.d.s). If $u = \sum_{i=1}^n x_i\otimes y_i \in M\otimes N$ and $v = \sum_{j=1}^m x_j^* \otimes y_j^* \in M^*\otimes N^*$ we set
$$\left\langle v,u \right\rangle = \sum_{i,j} \left\langle x_j^*,x_i \right\rangle \left\langle y_j^*,y_i \right\rangle.$$

\noindent Let $\alpha$ be a tensor norm on tensor products of finite dimensional spaces. We define, for $z\in M\otimes N$,$$\alpha'(z, M, N)=\sup \left\lbrace |\left\langle v,u \right\rangle| \ | \ v\in M^*\otimes N^*, \alpha(v)\leq 1 \right\rbrace.$$

\noindent Now, for $z\in X\otimes Y$, we set
$$\alpha'(z, X, Y) = \inf \left\lbrace \alpha'(z, M, N) \ | \ M\subset X, N\subset Y \ \text{f.d.s.}, \ z\in M\otimes N \right\rbrace.$$
$\alpha'$ defines a tensor norm on $X\otimes Y$, called the dual norm of $\alpha$. 

In the sequel, we will write $\alpha'(z)$ instead of $\alpha'(z, X, Y)$ for the norm of an element $z \in X \otimes Y$ when there is no possible confusion.

\subsection{Laprest\'{e} norms.} \cite[Proposition 12.5]{Defant}. Let $s \in [1,\infty]$. If $x_1,x_2, \ldots,x_n \in X$, we define
$$w_s(x_i,X):=\underset{x^* \in B_{X^*}}{\sup} \ \left( \sum_{i=1}^n |\left\langle x^*,x_i \right\rangle|^s \right)^{1/s}.$$
\noindent Let $p,q\in [1,\infty]$ with $\dfrac{1}{p}+\dfrac{1}{q}\geq 1$ and take $r\in [1,\infty]$ such that
$$\dfrac{1}{r}=\dfrac{1}{p}+\dfrac{1}{q}-1.$$
\noindent Denote by $p'$ and $q'$ the conjugate of $p$ and $q$. For $z\in X\otimes Y$, we define
$$\alpha_{p,q}(z)=\inf \left\lbrace \|(\lambda_i)_i\|_{\ell_r}w_{q'}(x_i,X)w_{p'}(y_i,Y) \ | \ z=\sum_{i=1}^n \lambda_i x_i\otimes y_i \right\rbrace.$$

\noindent Then $\alpha_{p,q}$ is a norm on $X\otimes Y$ and we denote by $X \otimes_{\alpha_{p,q}} Y$ its completion.\\

\subsection{$(p,q)-$Factorable operators.}

If $T\in \mathcal{B}(X,Y^*)$ and $\xi = \sum_i x_i\otimes y_i \in X\otimes Y$, then in accordance with $(\ref{dualproj})$ we set
$$\left\langle T, \xi \right\rangle = \sum_i \left\langle T(x_i), y_i \right\rangle.$$

\begin{definition}
Let $1 \leq p,q \leq \infty$ such that $\dfrac{1}{p}+\dfrac{1}{q}\geq 1$.
Let $T\in \mathcal{B}(X,Y^*)$. We say that $T \in \mathcal{L}_{p,q}(X,Y^*)$ if there exists a constant $C \geq 0$ such that
\begin{equation}\label{defLpq}
\forall \xi \in X\otimes Y, \ |\left\langle T, \xi \right\rangle| \leq C \alpha_{p,q}'(\xi).
\end{equation}
In this case, we write $L_{p,q}(T) = \inf \left\lbrace C \ | \ C \ \text{satisfying} \ (\ref{defLpq}) \right\rbrace.$\\
Then $(\mathcal{L}_{p,q}(X,Y^*), L_{p,q})$ is a Banach space, called the space of $(p,q)-$Factorable operators.
\end{definition}

\noindent For a general definition of the spaces $\mathcal{L}_{p,q}(X,Y)$ (including the case when the range is not a dual space), see \cite[Chapter 17]{Defant}.\\

\noindent Since $Y^*$ is 1-complemented in its bidual, \cite[Theorem 18.11]{Defant} gives the following result.

\begin{theorem}\label{Facto2}
Let $1 \leq p,q \leq \infty$ such that $\dfrac{1}{p}+\dfrac{1}{q}\geq 1$. Let $T\in \mathcal{B}(X,Y^*)$. The two following statements are equivalent :\\
$(i)$ $T \in \mathcal{L}_{p,q}(X,Y^*)$.\\
$(ii)$ There are a measure space $(\Omega,\mu)$ (a probability space when $\dfrac{1}{p}+\dfrac{1}{q}> 1$), operators $R\in \mathcal{B}(X,L^{q'}(\mu))$ and $S\in \mathcal{B}(L^p(\mu),Y^*))$ such that $T=S\circ I \circ R$

$$\xymatrix{
    X \ar[r]^T \ar[d]_R & Y^* \ar@{<-}[d]^S \\
    L^{q'}(\mu) \ar@{^{(}->}[r]_I & L^p(\mu)
    }$$
where $I : L^{q'}(\mu) \rightarrow L^p(\mu)$ is the inclusion mapping (well defined because $q' \geq p$).\\
\noindent In this case, $L_{p,q}(T)=\inf \|S\| \| R\|$ over all such factorizations.
\end{theorem}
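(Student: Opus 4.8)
The plan is to derive this statement directly from the general factorization theorem for $(p,q)$-factorable operators, \cite[Theorem 18.11]{Defant}. That theorem applies to operators into an \emph{arbitrary} Banach space, but it only produces a factorization through the \emph{bidual} of the target space. So the first step is to apply it to $T$ regarded as an operator into $Z := Y^*$. This yields a measure space $(\Omega,\mu)$ (a probability space exactly when $\frac1p+\frac1q>1$, equivalently when $q'>p$ and $I$ is not the identity), an operator $R \in \mathcal{B}(X, L^{q'}(\mu))$ and an operator $\tilde S \in \mathcal{B}(L^p(\mu), Y^{***})$ with
$$j_{Y^*}\circ T = \tilde S \circ I \circ R,$$
where $j_{Y^*}\colon Y^*\to Y^{***}$ is the canonical embedding and $I\colon L^{q'}(\mu)\to L^p(\mu)$ the inclusion; moreover $L_{p,q}(T)=\inf\|\tilde S\|\,\|R\|$ over all such bidual factorizations. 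At this point I should confirm that the definition of $\mathcal{L}_{p,q}(X,Y^*)$ adopted above (trace duality against $X\otimes Y$ with the dual Lapresté norm $\alpha_{p,q}'$) is the specialization to a dual range of the definition used in \cite{Defant}, so that the cited theorem applies with this pairing.

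The second ingredient is the elementary but essential fact that a dual space is $1$-complemented in its bidual. Concretely, $P := (j_Y)^*\colon Y^{***}\to Y^*$ is a norm-one projection satisfying the identity $P\circ j_{Y^*} = \mathrm{id}_{Y^*}$, which one verifies by evaluating both sides against an arbitrary $y\in Y$. This is precisely what permits one to ``descend'' the factorization from $Y^{***}$ back to $Y^*$ itself, which is the content the present statement adds to the general theorem.

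For the implication (i) $\Rightarrow$ (ii), I would set $S := P\circ \tilde S \in \mathcal{B}(L^p(\mu), Y^*)$ and compose the bidual factorization on the left with $P$; using $P\circ j_{Y^*}=\mathrm{id}_{Y^*}$ this gives
$$T = P\circ j_{Y^*}\circ T = P\circ \tilde S\circ I\circ R = S\circ I\circ R,$$
with $\|S\|\leq\|P\|\,\|\tilde S\|=\|\tilde S\|$, whence $\inf\|S\|\,\|R\|\leq L_{p,q}(T)$. For (ii) $\Rightarrow$ (i), a factorization $T=S\circ I\circ R$ with $S\colon L^p(\mu)\to Y^*$ yields, after composing with the isometry $j_{Y^*}$, the bidual factorization $j_{Y^*}\circ T=(j_{Y^*}\circ S)\circ I\circ R$ with $\|j_{Y^*}\circ S\|=\|S\|$; by \cite[Theorem 18.11]{Defant} this shows $T\in\mathcal{L}_{p,q}(X,Y^*)$ and $L_{p,q}(T)\leq\|S\|\,\|R\|$, hence $L_{p,q}(T)\leq\inf\|S\|\,\|R\|$. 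Combining the two inequalities gives the equivalence together with the norm identity $L_{p,q}(T)=\inf\|S\|\,\|R\|$.

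The only genuine obstacle is conceptual rather than computational: recognizing that the general theorem necessarily lands in $Y^{***}$, and that the dual structure of the target is exactly what removes the superfluous bidual. Everything else — the projection identity $P\circ j_{Y^*}=\mathrm{id}$, the isometry of $j_{Y^*}$, and the norm bookkeeping — is routine. I expect the point requiring the most care to be the verification that the trace-duality definition of $L_{p,q}$ used here is compatible with the range-$Y^{***}$ formulation of \cite[Theorem 18.11]{Defant}, so that the pairing $\langle T,\xi\rangle=\sum_i\langle T(x_i),y_i\rangle$ is the correct specialization of the general one.
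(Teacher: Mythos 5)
Your proposal is correct and takes essentially the same route as the paper: the paper's entire proof is the one-line remark that $Y^*$ is $1$-complemented in its bidual, so that \cite[Theorem 18.11]{Defant} applies, which is exactly the descent argument you carry out. Your write-up simply makes explicit what the paper leaves implicit, namely the projection $P=(j_Y)^*$ with $P\circ j_{Y^*}=\mathrm{id}_{Y^*}$, the composition with the isometry $j_{Y^*}$ for the converse, and the resulting norm equality $L_{p,q}(T)=\inf\|S\|\,\|R\|$.
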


\vspace{0.1cm}

\begin{remark} Here we consider the case when $\dfrac{1}{p}+\dfrac{1}{q}=1$. Denote by $p'$ the conjugate exponent of $p$. We have $T\in \mathcal{L}_{p,p'}(X,Y^*)$ if and only if there are a measure space $(\Omega,\mu)$, operators $R\in \mathcal{B}(X,L^{p}(\mu))$ and $S\in \mathcal{B}(L^p(\mu),Y^*)$ such that $T=SR$

$$\xymatrix @!0 @R=2pc @C=3pc {
X \ar[rr]^T \ar[rdd]_R &  & Y^* \\
 & & \\
& L^p(\mu) \ar[uur]_S &
}$$
We usually write $\Gamma_p(X,Y^*)$ instead of $\mathcal{L}_{p,p'}(X,Y^*)$. Such operators are called $p-$factorable.
\end{remark}

\vspace{0.1cm}

\begin{remark}\label{Facto} Suppose that $X=L^1(\lambda)$ and $Y=L^1(\nu)$ for some localizable measure spaces $(\Omega_1,\lambda)$ and $(\Omega_2,\nu)$. Consider $T \in \mathcal{B}(L^1(\lambda), L^{\infty}(\nu))$. By $(\ref{LinfB})$, there exists $\psi \in L^{\infty}(\lambda \times \nu)$ such that
$$T = u_{\psi}.$$
(See subsection $\ref{Notations}$ for the notation.)
\begin{enumerate}
\item[(i)] If $1 < q < +\infty$, $L^{q'}(\mu)$ has RNP so by ($\ref{Linftensorcor}$),
$$\mathcal{B}(L^1(\lambda), L^{q'}(\mu)) = L^{\infty}(\lambda, L^{q'}(\mu)).$$
It means that if $R\in \mathcal{B}(X,L^{q'}(\mu))$, there exists $a\in L^{\infty}(\lambda, L^{q'}(\mu))$ such that
$$\forall f\in L^1(\lambda), R(f)=\int_{\Omega_1} f(s)a(s)\text{d}\lambda(s).$$
\item[(ii)] If $1<p<+\infty$, then using $(\ref{dualproj})$, $(\ref{L1tensor})$ and $(\ref{L1tensorcor})$ we obtain
\begin{equation*}
B(L^p(\mu), L^{\infty}(\nu)) = (L^p(\mu)\overset{\wedge}{\otimes} L^1(\nu))^* = L^{\infty}(\nu, L^{p'}(\mu)).
\end{equation*}
Thus, if $S\in \mathcal{B}(L^p(\mu),L^{\infty}(\nu))$, there exists $b\in L^{\infty}(\nu, L^{p'}(\mu))$ such that
$$\forall g\in L^p(\lambda), S(g)(\cdot)=\left\langle g, b(\cdot) \right\rangle.$$
\end{enumerate}
We deduce that if $1 < p,q < +\infty$, there exist $a\in L^{\infty}(\lambda, L^{q'}(\mu))$ and $b\in L^{\infty}(\nu, L^{p'}(\mu))$ such that for almost every $(s,t) \in \Omega_1 \times \Omega_2$,
$$\psi(s,t) = \left\langle a(s), b(t) \right\rangle.$$
If $T$ satisfies Theorem $\ref{Facto2}$, the latter implies that for all $f\in L^1(\lambda)$,
$$T(f) = \int_{\Omega_1} \left\langle a(s), b(\cdot) \right\rangle f(s) \ \text{d}s.$$

Using the same identifications we have for the following cases :
\begin{enumerate}
\item If $q=1$ and $1 < p < +\infty$, then there exist $a\in L^{\infty}(\lambda \times \mu)$ and $b\in L^{\infty}(\nu, L^{p'}(\mu))$ such that for almost every $(s,t) \in \Omega_1 \times \Omega_2$,
$$\psi(s,t) = \left\langle a(s,\cdot), b(t) \right\rangle.$$
\item If $1 < q < +\infty$ and $p=+\infty$, then there exist $a\in L^{\infty}(\lambda, L^{q'}(\mu))$ and $b\in L^{\infty}(\nu \times \mu)$ such that for almost every $(s,t) \in \Omega_1 \times \Omega_2$,
$$\psi(s,t) = \left\langle a(s), b(t, \cdot) \right\rangle.$$
\item If $q=1$ and $p=+\infty$, then there exist $a\in L^{\infty}(\lambda \times \mu)$ and $b\in L^{\infty}(\nu \times \mu)$ such that for almost every $(s,t) \in \Omega_1 \times \Omega_2$,
$$\psi(s,t) = \left\langle a(s,\cdot), b(t, \cdot) \right\rangle.$$
\end{enumerate}

\end{remark}

\vspace{0.5cm}

\subsection{Finite dimensional case.} If $X$ and $Y$ are finite dimensional, it follows from the very definition of the dual norm that
$$X \otimes_{\alpha_{p,q}'} Y = (X^* \otimes_{\alpha_{p,q}} Y^*)^*.$$
The next theorem describes the elements of this space.

\begin{theorem}\cite[Theorem 19.2]{Defant} Let $E$ and $F$ be Banach spaces.
Let $p,q \in [1,\infty]$ with $\dfrac{1}{p}+\dfrac{1}{q}\geq 1$ and $K \subset B_{E^*}$ and $L\subset B_{F^*}$ weak$-*$-compact norming sets for E and F, respectively. For $\phi : E\otimes F \rightarrow \mathbb{C}$ the following two statements are equivalent:\\
$(i) \ \phi\in (E\otimes_{\alpha_{p,q}} F)^*$.\\
$(ii)$ There are a constant $A\geq 0$ and normalized Borel-Radon measures $\mu$ on $K$ and $\nu$ on $L$ such that for all $x\in E$ and $y\in F$,
\begin{equation}\label{dominated}
\left\langle \phi, x\otimes y\right\rangle| \leq A \left( \int_K |\left\langle x^*, x \right\rangle|^{q'} \text{d}\mu(x^*) \right)^{1/q'} \left( \int_L |\left\langle y^*, y \right\rangle|^{p'} \text{d}\mu(y^*) \right)^{1/p'}
\end{equation}
(if the exponent is $\infty$, we replace the integral by the norm).\\

\noindent In this case, $\|\phi\|_{(E\otimes_{\alpha_{p,q}} F)^*} = \inf \left\lbrace A \ | \ A \ \text{as in (ii)} \right\rbrace.$
\end{theorem}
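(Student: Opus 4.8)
The plan is to prove $(i)\Leftrightarrow(ii)$ with matching constants, treating the two implications separately. Set $\theta=\frac1{p'}+\frac1{q'}$; the hypothesis $\frac1p+\frac1q\ge1$ reads $\theta\le1$, and one checks the identity $\frac1r+\frac1{q'}+\frac1{p'}=1$. For $x\in E$ and a normalized Borel-Radon measure $\mu$ on $K$ (the set of such measures I denote $P(K)$) write $\|x\|_{q',\mu}=\left(\int_K|\langle x^*,x\rangle|^{q'}\,d\mu(x^*)\right)^{1/q'}$, and similarly $\|y\|_{p',\nu}$ for $y\in F$ and $\nu\in P(L)$.

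The implication $(ii)\Rightarrow(i)$ is the routine half. Given a representation $z=\sum_i\lambda_i x_i\otimes y_i$, I would insert the domination $(\ref{dominated})$ into $\langle\phi,z\rangle=\sum_i\lambda_i\langle\phi,x_i\otimes y_i\rangle$ and apply H\"older's inequality with the three exponents $r,q',p'$, which is legitimate since their reciprocals sum to $1$. A use of Tonelli together with $\mu(K)=\nu(L)=1$ and $K\subset B_{E^*}$, $L\subset B_{F^*}$ gives $\left(\sum_i\|x_i\|_{q',\mu}^{q'}\right)^{1/q'}\le w_{q'}(x_i,E)$ and the analogous bound for the $y_i$, whence $|\langle\phi,z\rangle|\le A\,\|(\lambda_i)\|_{\ell_r}\,w_{q'}(x_i,E)\,w_{p'}(y_i,F)$. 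Taking the infimum over all representations yields $|\langle\phi,z\rangle|\le A\,\alpha_{p,q}(z)$, i.e. $\phi\in(E\otimes_{\alpha_{p,q}}F)^*$ with norm $\le A$; the cases where $p$ or $q$ is $1$ or $\infty$ are covered by the stated convention of replacing the offending integral by a supremum.

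For $(i)\Rightarrow(ii)$ I would argue by a two-measure Pietsch-type domination. Assume $\|\phi\|_{(E\otimes_{\alpha_{p,q}}F)^*}\le A$ and, for $(x,y)\in E\times F$, define on the weak-$*$ compact convex set $\mathcal P=P(K)\times P(L)$ the function $f_{x,y}(\mu,\nu)=|\langle\phi,x\otimes y\rangle|-A\,\|x\|_{q',\mu}\,\|y\|_{p',\nu}$. The goal is a single $(\mu,\nu)\in\mathcal P$ with $f_{x,y}(\mu,\nu)\le0$ for all $(x,y)$, which is exactly $(\ref{dominated})$. The decisive structural point is that $f_{x,y}$ is jointly convex and weak-$*$ continuous on $\mathcal P$: since $\mu\mapsto\|x\|_{q',\mu}^{q'}$ and $\nu\mapsto\|y\|_{p',\nu}^{p'}$ are affine, while $(u,v)\mapsto u^{1/q'}v^{1/p'}$ is jointly concave on the positive quadrant precisely because $\frac1{q'}+\frac1{p'}=\theta\le1$, the product $(\mu,\nu)\mapsto\|x\|_{q',\mu}\|y\|_{p',\nu}$ is jointly concave and $f_{x,y}$ jointly convex. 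Each sublevel set $C_{x,y}=\{f_{x,y}\le0\}$ is then weak-$*$ closed and convex, and nonempty by testing against Dirac masses at norming functionals, so by compactness it suffices to establish the finite intersection property. Fixing finitely many $(x_k,y_k)$, I would invoke Sion's minimax theorem on $\mathcal P\times\Delta$, with $\Delta$ the probability simplex, to reduce $\min_{\mathcal P}\max_k f_{x_k,y_k}\le0$ to the single scalar inequality, for each weight $c\in\Delta$,
$$\sum_k c_k\,|\langle\phi,x_k\otimes y_k\rangle|\ \le\ A\,\max_{(\mu,\nu)\in\mathcal P}\ \sum_k c_k\,\|x_k\|_{q',\mu}\,\|y_k\|_{p',\nu}.$$

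The main obstacle is precisely this last inequality, where the hypothesis on $\phi$ must be fed in. Choosing unimodular $\epsilon_k$ so that $\sum_k c_k|\langle\phi,x_k\otimes y_k\rangle|=\langle\phi,z\rangle$ for $z=\sum_k\epsilon_k c_k\,x_k\otimes y_k$, the left-hand side is at most $A\,\alpha_{p,q}(z)$, so everything reduces to the purely geometric estimate $\alpha_{p,q}(z)\le\max_{(\mu,\nu)}\sum_k c_k\|x_k\|_{q',\mu}\|y_k\|_{p',\nu}$. This is the delicate point: a naive fixed representation of $z$, for instance redistributing the weights $c_k$ so as to match the $w_{q'},w_{p'}$ factors, overestimates $\alpha_{p,q}(z)$ and is too weak to close the argument, so one must instead build the representation from a maximizing pair $(\mu^*,\nu^*)$, whose optimal measures are in general spread out rather than Dirac masses. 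Once this estimate is in hand the finite intersection property follows, the intersection $\bigcap_{x,y}C_{x,y}$ is nonempty, and the resulting $(\mu,\nu)$ gives $(\ref{dominated})$ with constant $A$; combining with the easy direction yields the asserted equality of $\|\phi\|_{(E\otimes_{\alpha_{p,q}}F)^*}$ with the infimum of the admissible constants $A$.
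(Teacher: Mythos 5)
The paper itself offers no proof of this statement (it is quoted verbatim from Defant--Floret, Theorem 19.2), so your attempt can only be measured against the known proof there; by that standard it is incomplete. Your direction $(ii)\Rightarrow(i)$ is correct and is the routine half. The problem is in $(i)\Rightarrow(ii)$: the whole content of the theorem ends up concentrated in the inequality you yourself flag as ``the delicate point'',
$$\alpha_{p,q}\Bigl(\sum_k\epsilon_k c_k\,x_k\otimes y_k\Bigr)\ \le\ \max_{(\mu,\nu)\in\mathcal P}\ \sum_k c_k\,\|x_k\|_{q',\mu}\,\|y_k\|_{p',\nu},$$
and this inequality is never proved; ``build the representation from a maximizing pair $(\mu^*,\nu^*)$'' is a hope, not an argument. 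Worse, it is not a lemma-sized step: since $\alpha_{p,q}(z)=\sup\{|\langle\psi,z\rangle|\ :\ \|\psi\|_{(E\otimes_{\alpha_{p,q}}F)^*}\le 1\}$, the only evident way to verify it is to dominate every $\psi$ in the dual unit ball by a pair of measures on $K\times L$ --- which is exactly the theorem being proved, so the reduction is circular in content. The structural reason your setup jams is that you kept the product $\|x\|_{q',\mu}\|y\|_{p',\nu}$: it is concave but not affine in $(\mu,\nu)$, so after Sion's minimax the inner minimum cannot be evaluated at Dirac masses, and hypothesis (i), which controls $\phi$ only against the sup-type quantities $w_{q'},w_{p'}$, gives no grip on the resulting mixed expression.

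The missing idea is to linearize \emph{before} running the compactness argument. Let $1/r'=1/q'+1/p'$ (so $r'$ is conjugate to the $r$ in the definition of $\alpha_{p,q}$, and $q'/r'$, $p'/r'$ are conjugate exponents). By bihomogeneity of $\phi$ and the equality case of Young's inequality (scaling $x\mapsto tx$, $y\mapsto y/t$), the product domination (ii) is equivalent to the additive domination
$$|\langle\phi,x\otimes y\rangle|^{r'}\ \le\ A^{r'}\Bigl(\tfrac{r'}{q'}\,\|x\|_{q',\mu}^{q'}+\tfrac{r'}{p'}\,\|y\|_{p',\nu}^{p'}\Bigr)\qquad(x\in E,\ y\in F),$$
whose right-hand side is affine and weak-$*$ continuous in $(\mu,\nu)$. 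Run your scheme on $g_{x,y}(\mu,\nu):=|\langle\phi,x\otimes y\rangle|^{r'}-A^{r'}\bigl(\tfrac{r'}{q'}\|x\|_{q',\mu}^{q'}+\tfrac{r'}{p'}\|y\|_{p',\nu}^{p'}\bigr)$. Two observations then close the argument. First, (i) is equivalent, via $\ell_r$--$\ell_{r'}$ duality over representations, to the summing inequality $\bigl(\sum_k|\langle\phi,x_k\otimes y_k\rangle|^{r'}\bigr)^{1/r'}\le A\,w_{q'}((x_k))\,w_{p'}((y_k))$, and since $x^*\mapsto\sum_k|\langle x^*,x_k\rangle|^{q'}$ is convex and weak-$*$ continuous, these $w$'s equal suprema over the norming sets $K$, $L$. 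Second, the identity $r'(1/q'+1/p')=1$ shows that any convex combination $\sum_kc_k\,g_{x_k,y_k}$ equals $\sum_k g_{\tilde x_k,\tilde y_k}$ with $\tilde x_k=c_k^{1/q'}x_k$, $\tilde y_k=c_k^{1/p'}y_k$; evaluating at Dirac masses placed at points of $K$ and $L$ maximizing $\sum_k|\langle x^*,\tilde x_k\rangle|^{q'}$ and $\sum_k|\langle y^*,\tilde y_k\rangle|^{p'}$, the summing inequality plus Young give $\sum_k g_{\tilde x_k,\tilde y_k}\le 0$ there. Hence the minimax value is $\le0$, the finite intersection property holds, compactness of $\mathcal P$ yields a single pair $(\mu_0,\nu_0)$ with $g_{x,y}(\mu_0,\nu_0)\le0$ for all $x,y$, and optimizing over the scaling $t$ recovers the product domination with the same constant $A$ (with the stated sup-norm convention when $q'$ or $p'$ is infinite). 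This linearization is precisely what your proposal lacks.
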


\vspace{0.3cm}

\noindent This theorem will allow us to describe the predual of $\mathcal{L}_{p,q}(\ell_1^n,\ell_{\infty}^m)$, $n,m\in \mathbb{N}$. Let us apply the previous theorem with $E=\ell_{\infty}^n$ and $F=\ell_{\infty}^m$.
Take $T\in \ell_1^n \otimes_{\alpha_{p,q}'} \ell_1^m = (\ell_{\infty}^n \otimes_{\alpha_{p,q}} \ell_{\infty}^m)^*$ and let 
$$T= \sum_{i=1}^n \sum_{j=1}^m T(i,j) e_i \otimes e_j$$
be a representation of $T$. In the previous theorem, we can take $K=\left\lbrace 1,2,\ldots,n \right\rbrace$ and $L=\left\lbrace 1,2,\ldots,m \right\rbrace$. In this case, a normalized Borel-Radon measure $\mu$ on $K$ is nothing but a sequence $\mu=(\mu_1,\ldots,\mu_n)$ where, for all $i$, $\mu_i:=\mu(\left\lbrace i \right\rbrace )\geq 0$ and $\sum_i \mu_i=1$. Similarly, $\nu=(\nu_1,\ldots,\nu_m)$ where, for all $i$, $\nu_i\geq 0$ and $\sum_i \nu_i=1$. In this case, the inequality $(\ref{dominated})$ means that for all sequences of complex numbers $x=(x_i)_{i=1}^n, y=(y_j)_{i=j}^m$,
$$\left| \sum_{i=1}^n \sum_{j=1}^m T(i,j) x_i y_j \right| \leq A  \left( \sum_{k=1}^n |x_k|^{q'}\mu_k \right)^{1/q'} \left( \sum_{k=1}^m |y_k|^{p'} \nu_k \right)^{1/p'}.$$
Set $\alpha_k=x_k \mu_k^{1/q'}$, $\beta_k=y_k \nu_k^{1/p'}$ and define, for $1\leq i \leq n, 1\leq j \leq m$, $c(i,j)$ such that $T(i,j)=c(i,j) \mu_i^{1/q'} \nu_j^{1/p'}$ (we can assume $\mu_i>0$ and $\nu_j>0$). Then, the previous inequality becomes
$$\left| \sum_{i=1}^n \sum_{j=1}^m c(i,j) \beta_j \alpha_i \right| \leq A \|\alpha\|_{\ell_{q'}^n}  \|\beta\|_{\ell_{p'}^m}.$$
This means that the operator $c : \ell_{q'}^n \rightarrow \ell_{p}^m$ whose matrix is $[c(i,j)]_{1\leq j \leq m, 1\leq i \leq n}$ has a norm smaller than $A$. Moreover, if we see $T$ as a mapping from $\ell_{\infty}^n$ into $\ell_1^m$ the relation between $T$ and $c$ means that $T$ admits the following factorization

$$\xymatrix{
    \ell_{\infty}^n \ar[r]^T \ar[d]_{d_{\mu}} & \ell_1^m \ar@{<-}[d]^{d_{\nu}} \\
    \ell_{q'}^n \ar[r]_c & \ell_{p}^m
    }$$
where $d_{\mu}$ and $d_{\nu}$ are the operators of multiplication by $\mu=(\mu_1^{1/q'},\ldots,\mu_n^{1/q'})$ and $\nu=(\nu_1^{1/p'},\ldots,\nu_m^{1/p'})$. Those operators have norm 1.\\

\noindent Therefore, it is easy to check that
\begin{equation}\label{formulepredual}
\|T\|_{(\ell_{\infty}^n \otimes_{\alpha_{p,q}} \ell_{\infty}^m)^*} = \inf \left\lbrace  \|c\| \ | \ T=d_{\nu} \circ c \circ d_{\mu}  \right\rbrace.
\end{equation}
The elements of $(\ell_{\infty}^n \otimes_{\alpha_{p,q}} \ell_{\infty}^m)^*$ are called $(q',p')-$dominated operators. For more informations about this space in the infinite dimensional case (it is the predual of $\mathcal{L}_{p,q}$), see for instance \cite[Chapter 19]{Defant}.\\

\noindent By $(\ref{formulepredual})$ and the fact that $\mathcal{L}_{p,q}(\ell_1^n,\ell_{\infty}^n) = (\ell_1^n \otimes_{\alpha_{p,q}'} \ell_1^m)^*$, we get the following result.

\begin{proposition}
Let $v =[v_{ij}] : \ell_1^n \rightarrow \ell_{\infty}^m$. Then
$$L_{p,q}(v)= \sup |\text{Tr}(vu)|$$
where the supremum runs over all $u : \ell_{\infty}^m \rightarrow \ell_1^n$ admitting the factorization
$$\xymatrix{
    \ell_{\infty}^m \ar[r]^u \ar[d]_{d_{\mu}} & \ell_1^n \ar@{<-}[d]^{d_{\nu}} \\
    \ell_{p'}^m \ar[r]_c & \ell_{q}^n
    }$$
with $\|d_{\mu}\| \leq 1, \|d_{\nu}\|\leq 1$ and $\|c\| \leq1$.\\
Equivalently,
$$L_{p,q}(v)=\sup \left\lbrace \left| \sum_{i=1}^m \sum_{j=1}^n v_{ij} c_{ji} \mu_i \nu_j  \right| \ | \ \|c : \ell_{p'}^m \rightarrow \ell_{q}^n\| \leq 1, \| \mu\|_{\ell_{p'}^m} \leq 1, \| \nu\|_{\ell_{q'}^n} \leq 1 \right\rbrace.$$
\end{proposition}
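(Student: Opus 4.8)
The plan is to recognize $L_{p,q}(v)$ as a dual norm and then read off the unit ball of the predual from $(\ref{formulepredual})$. Since $v \in \mathcal{L}_{p,q}(\ell_1^n,\ell_\infty^m) = (\ell_1^n \otimes_{\alpha_{p,q}'} \ell_1^m)^*$, the quantity $L_{p,q}(v)$ is by definition the supremum of $|\langle v,\xi\rangle|$ as $\xi$ runs over the unit ball of $\ell_1^n \otimes_{\alpha_{p,q}'} \ell_1^m$. So the entire task splits into two parts: first, describing this unit ball concretely, and second, rewriting the pairing $\langle v,\xi\rangle$ in a usable form.

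For the first part, I would use the finite-dimensional duality $\ell_1^n \otimes_{\alpha_{p,q}'} \ell_1^m = (\ell_\infty^n \otimes_{\alpha_{p,q}} \ell_\infty^m)^*$ together with $(\ref{formulepredual})$, which identifies the elements of the unit ball of $(\ell_\infty^n \otimes_{\alpha_{p,q}} \ell_\infty^m)^*$ exactly with the $(q',p')$-dominated operators admitting a factorization $d_\nu \circ c \circ d_\mu$ in which $\|c\| \leq 1$ and $d_\mu, d_\nu$ are multiplication operators of norm $\leq 1$. A minor point, which I would dispatch in one line, is that the infimum over factorizations being $\leq 1$ is equivalent, for the purpose of computing a supremum, to the existence of a single factorization with $\|c\| \leq 1$; this uses that in finite dimensions the relevant sets are compact and that scaling $c$ and absorbing constants causes no loss.

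For the second part, I would identify the tensor $\xi \in \ell_1^n \otimes \ell_1^m$ with an operator $u \colon \ell_\infty^m \to \ell_1^n$ by dualizing the second factor via $\ell_1^m = (\ell_\infty^m)^*$, and then verify by a direct coordinate computation that $\langle v,\xi\rangle = \text{Tr}(vu)$: writing $\xi = \sum_k x_k \otimes y_k$, the left-hand side is $\sum_k \langle v(x_k),y_k\rangle = \sum_{i,j} v_{ij} u_{ji}$, which is precisely $\text{Tr}(vu)$. Under this (transpose) identification, the factorization of the dominated operator produced above turns into a factorization of $u$ through $c \colon \ell_{p'}^m \to \ell_q^n$; the exponents and the indices $n,m$ interchange because passing to the transpose exchanges the two tensor factors, and $\|c\|$ is unchanged under taking adjoints. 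This yields the first displayed formula. The equivalent explicit formula then follows by expanding $\text{Tr}(vu)$ for $u = d_\nu \circ c \circ d_\mu$: computing the matrix entries $u_{ji} = \mu_i \nu_j c_{ji}$ gives $\text{Tr}(vu) = \sum_{i,j} v_{ij} c_{ji} \mu_i \nu_j$, while $\|d_\mu\| \leq 1$ and $\|d_\nu\| \leq 1$ translate into $\|\mu\|_{\ell_{p'}^m} \leq 1$ and $\|\nu\|_{\ell_{q'}^n} \leq 1$ by H\"older's inequality.

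I expect the main obstacle to be the bookkeeping in the second part: carefully tracking the transpose/adjoint that relates the ``first-factor'' viewpoint underlying $(\ref{formulepredual})$ (operators $\ell_\infty^n \to \ell_1^m$ factoring through $\ell_{q'}^n \to \ell_p^m$) to the ``second-factor'' viewpoint needed here (operators $\ell_\infty^m \to \ell_1^n$ factoring through $\ell_{p'}^m \to \ell_q^n$), and confirming that the pairing $\langle v,\xi\rangle$ really equals $\text{Tr}(vu)$ for this identification and not for its transpose. One can handle this either by re-running the computation leading to $(\ref{formulepredual})$ with the two tensor factors interchanged, or by passing to adjoints and using $\|c^*\| = \|c\|$; everything else is routine finite-dimensional linear algebra.
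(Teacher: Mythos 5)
Your proposal is correct and follows essentially the same route as the paper: the paper deduces the proposition directly from the duality $\mathcal{L}_{p,q}(\ell_1^n,\ell_{\infty}^m) = (\ell_1^n \otimes_{\alpha_{p,q}'} \ell_1^m)^*$ together with $(\ref{formulepredual})$, which is exactly your two-step argument. The paper leaves the details unwritten, and your filling-in of the trace identity $\langle v,\xi\rangle = \mathrm{Tr}(vu)$ and of the transpose bookkeeping (which swaps $n$ with $m$ and $\ell_{q'}\to\ell_p$ with $\ell_{p'}\to\ell_q$, using $\|c^*\|=\|c\|$) is precisely what the paper's one-line justification tacitly relies on.
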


\vspace{0.3cm}

\section{The main result}

\vspace{0.5cm}

\subsection{Schur multipliers and factorization}\label{main}
Let $p,q$ be two positive numbers such that $1\leq q \leq p \leq \infty$. This condition is equivalent to $p,q \in [1,\infty]$ with $\dfrac{1}{q}+\dfrac{1}{p'}\geq 1$, so that we can consider the space $\mathcal{L}_{q,p'}$.\\

The following results will allow us to give a description of the functions $\phi$ which are Schur multipliers.

\vspace{0.3cm}

\begin{lemma}\label{gammapinv}
Let $X$, $Y$ be Banach spaces and let $E \subset X, F\subset Y$ be $1-$complemented subspaces of $X$ and $Y$. For any $v\in E\otimes F$, denote by $\tilde{\alpha}'_{q,p'}(v)$ the $\alpha'_{q,p'}$-norm of $v$ as an element of $E\otimes F$ and by $\alpha'_{q,p'}(v)$ the $\alpha'_{q,p'}$-norm of $v$ as an element of $X\otimes Y$. Then
$$ \tilde{\alpha}'_{q,p'}(v)=\alpha'_{q,p'}(v).$$
\end{lemma}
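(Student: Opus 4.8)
The plan is to deduce the equality from the norm-one projections provided by the $1$-complementation, using that $\alpha'_{q,p'}$ is a tensor norm in the sense of \cite{Defant} and therefore enjoys the metric mapping property. Fix isometric inclusions $j_E\colon E\hookrightarrow X$, $j_F\colon F\hookrightarrow Y$ and norm-one projections $P\colon X\to E$, $Q\colon Y\to F$ with $Pj_E=\mathrm{id}_E$ and $Qj_F=\mathrm{id}_F$. The statement is then an instance of the general principle that any tensor norm embeds a $1$-complemented subspace isometrically; the two inequalities correspond to the two contractions $j_E\otimes j_F$ and $P\otimes Q$.

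For the inequality $\alpha'_{q,p'}(v)\le\tilde{\alpha}'_{q,p'}(v)$ I would simply note that any finite dimensional pair $M\subset E$, $N\subset F$ with $v\in M\otimes N$ is also an admissible pair inside $X$ and $Y$ carrying the same induced norms, and that $\alpha'_{q,p'}(v,M,N)$ depends only on $M$ and $N$; hence the infimum computing $\alpha'_{q,p'}(v,X,Y)$ ranges over a larger family than the one computing $\tilde{\alpha}'_{q,p'}(v)$, giving the inequality at once (this is the content of the contraction $j_E\otimes j_F$). For the reverse inequality I would invoke the metric mapping property to see that $P\otimes Q$ induces a contraction $X\otimes_{\alpha'_{q,p'}}Y\to E\otimes_{\alpha'_{q,p'}}F$, since $\|P\|=\|Q\|=1$. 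Because $(P\otimes Q)(j_E\otimes j_F)=\mathrm{id}_{E\otimes F}$, applying this contraction to $(j_E\otimes j_F)(v)$ returns $v$ and yields
$$\tilde{\alpha}'_{q,p'}(v)=\alpha'_{q,p'}\bigl((P\otimes Q)(j_E\otimes j_F)(v),E,F\bigr)\le\alpha'_{q,p'}\bigl((j_E\otimes j_F)(v),X,Y\bigr)=\alpha'_{q,p'}(v),$$
which closes the argument.

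The step I expect to be the crux is the metric mapping property for the projection $P\otimes Q$, and if one wishes to avoid quoting it for the dual norm it must be unwound at the finite dimensional level. Concretely, given f.d.s. $M\subset X$, $N\subset Y$ with $v\in M\otimes N$, set $M'=P(M)$ and $N'=Q(N)$; then $v=(P\otimes Q)(v)\in M'\otimes N'$. For $w\in(M')^*\otimes(N')^*$ with $\alpha_{q,p'}(w)\le1$, the transported functional $\bigl((P|_M)^*\otimes(Q|_N)^*\bigr)(w)$ lies in $M^*\otimes N^*$ with $\alpha_{q,p'}$-norm at most $1$, the latter being the metric mapping property of $\alpha_{q,p'}$ on finite dimensional spaces, immediate from its explicit formula through the $w_s$ functionals. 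Using $(P\otimes Q)(v)=v$ to rewrite a representation of $v$ in $M'\otimes N'$ as $\sum_k P(\hat{a}_k)\otimes Q(\hat{b}_k)$ shows that this transported functional pairs with $v$ exactly as $w$ does; taking suprema and then infima gives $\tilde{\alpha}'_{q,p'}(v)\le\alpha'_{q,p'}(v)$. The only delicate bookkeeping is this identity of pairings, which rests entirely on $P$ and $Q$ fixing $E$ and $F$ pointwise.
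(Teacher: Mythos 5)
Your proof is correct and follows essentially the same route as the paper: the easy inequality $\alpha'_{q,p'}(v)\le\tilde{\alpha}'_{q,p'}(v)$ from the inclusion of admissible finite-dimensional pairs, and the converse by replacing a pair $M\subset X$, $N\subset Y$ with $P(M)\subset E$, $Q(N)\subset F$, transporting functionals through $(P|_M)^*\otimes(Q|_N)^*$ (whose $\alpha_{q,p'}$-norm control via the $w_s$ functionals is exactly the paper's computation), and using $(P\otimes Q)(v)=v$ to match the pairings. Your preliminary framing through the metric mapping property of the tensor norm is only a packaging difference, and your unwound finite-dimensional argument coincides with the paper's proof.
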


\begin{proof}
The inequality $ \tilde{\alpha}'_{q,p'}(v)\geq \alpha'_{q,p'}(v)$ is easy to prove. For the converse inequality, take $v=\sum_k e_k\otimes f_k \in E\otimes F$ such that $\alpha'_{q,p'}(v)<1$ and show that $\tilde{\alpha}'_{q,p'}(v) < 1$. By assumption, there exists $M \subset X$ and $N \subset Y$ finite dimensional subspaces such that $v\in M \otimes N$ and
$$\alpha'(v,M,N) < 1.$$
By assumption, there exist two norm one projections $P$ and $Q$ respectively from $X$ onto $E$ and from $Y$ onto $F$. Set $M_1 = P(M) \subset E $ and $N_1 = Q(N) \subset F$. $M_1$ and $N_1$ are finite dimensional. Moreover, since $v \in E \otimes F$, it is easy to check that $(P \otimes Q) (v) =v$, where, for all $c = \sum_l a_l \otimes b_l \in X \otimes Y$,
$$(P\otimes Q)(c)= \sum_l P(a_l) \otimes Q(b_l).$$
Thus, $v \in M_1 \otimes N_1$. We will show that $\alpha'_{q,p'}(v,M_1,N_1) < 1$.\\
Let $z = \sum_{j=1}^m x_j^* \otimes y_j^* \in M_1^*\otimes N_1^*$ be such that $\alpha_{q,p'}(z) < 1$ and show that $|\left\langle v,z \right\rangle | \leq \alpha'_{q,p'}(v)$, so that $\alpha'_{q,p'}(v, M_1, N_1) \leq 1$.\\
Let $1\leq r \leq \infty$ such that $$\dfrac{1}{r}=\dfrac{1}{q}+\dfrac{1}{p'}-1.$$
The condition $\alpha_{q,p'}(z) < 1$ in $M_1^*\otimes N_1^*$ implies that $z$ admits a representation $z = \sum_{j=1}^m \lambda_j m_j^* \otimes n_j^*$ where $m_j^* \in M_1^*, n_j^* \in N_1^*$ and
$$\|(\lambda_j)_j\|_{\ell_r}w_{p}(m_j^*,M_1^*)w_{q'}(n_j^*,N_1^*) < 1.$$
Set $\tilde{z}:= \sum_{j=1}^m \lambda_j P^*(m_j^*) \otimes Q^*(n_j^*)$ in $M^* \otimes N^*$. It is easy to check that
$$w_{p}(P^*(m_j^*),M^*)  \leq w_{p}(m_j^*,M_1^*) \ \ \ \text{and} \ \ \ w_{q'}(Q^*(n_j^*),N^*)  \leq w_{q'}(n_j^*,N_1^*).$$
Therefore, $\alpha_{q,p'}(\tilde{z}, M^*, N^*) < 1$. Then, the condition $\alpha'_{q,p'}(v,M,N) < 1$ implies that
$$|\left\langle v,\tilde{z} \right\rangle | \leq \alpha'_{q,p'}(v).$$
Finally, we have
\begin{align*}
\left\langle v,\tilde{z} \right\rangle
& = \sum_{j,k} \lambda_j \left\langle  P^*(m_j^*),e_k \right\rangle \left\langle Q^*(n_j^*),f_k \right\rangle \\
& = \sum_{j,k} \lambda_j \left\langle  m_j^*,P(e_k) \right\rangle \left\langle n_j^*,Q(f_k) \right\rangle \\
& = \sum_{j,k} \lambda_j \left\langle  m_j^*,e_k \right\rangle \left\langle n_j^*,f_k \right\rangle = \left\langle v,z \right\rangle,
\end{align*}
and therefore
$$|\left\langle v,z \right\rangle | \leq \alpha'_{q,p'}(v).$$
This proves that $\tilde{\alpha}'_{q,p'}(v) < 1$.
\end{proof}

\noindent We recall that if $\phi\in L^{\infty}(\Omega_1 \times \Omega_2)$, we denote by $u_{\phi}$ the mapping
\begin{equation*}
\begin{array}[t]{lrcl}
u_{\phi} : & L^1(\Omega_1) & \longrightarrow & L^{\infty}(\Omega_2). \\
& f & \longmapsto & \displaystyle \int_{\Omega_1} \phi(s,\cdot)f(s)\ \text{d}\mu_1(s) \end{array}
\end{equation*}

\begin{theorem}\label{multipliers}
Let $(\Omega_1,\mu_1)$ and $(\Omega_2, \mu_2)$ be two localizable measure spaces and let $\phi\in L^{\infty}(\Omega_1 \times \Omega_2)$. Let $1\leq q \leq p \leq \infty$. Then $\phi$ is a Schur multiplier on $\mathcal{B}(L^p (\Omega_1), L^q (\Omega_2))$ if and only if the operator $u_{\phi}$ belongs to $\mathcal{L}_{q,p'}(L^1 (\Omega_1), L^{\infty}(\Omega_2))$. Moreover,
$$\| T_{\phi} \| = L_{q,p'}(u_{\phi}).$$
\end{theorem}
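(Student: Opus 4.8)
The plan is to reduce the identity $\|T_\phi\| = L_{q,p'}(u_\phi)$ to a purely finite-dimensional statement and to match the two quantities there. On the Schur side the reduction is already available: by Proposition \ref{discret} we have $\|T_\phi\| = \sup_{n,m,A,B}\|T_{\phi_{A,B}}\|$, where $\|T_{\phi_{A,B}}\|$ is the norm of the classical Schur multiplier $(\phi_{ij})$ on $\mathcal{B}(\ell_p^n,\ell_q^m)$. For the factorable side I would discretize
\begin{equation*}
L_{q,p'}(u_\phi) = \sup\left\{ |\langle u_\phi,\xi\rangle| \ : \ \xi\in L^1(\Omega_1)\otimes L^1(\Omega_2),\ \alpha'_{q,p'}(\xi)\le1\right\}
\end{equation*}
as follows. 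Given $A\in\mathcal{A}_{n,\Omega_1}$ and $B\in\mathcal{A}_{m,\Omega_2}$, the subspaces $S_{A,1}$ and $S_{B,1}$ are $1$-complemented in $L^1(\Omega_1)$ and $L^1(\Omega_2)$ via the conditional expectations $(\ref{conexp})$, so Lemma \ref{gammapinv} guarantees that $\alpha'_{q,p'}$ computed on $S_{A,1}\otimes S_{B,1}$ agrees with the ambient one. Using the isometries $\varphi_{A,1},\varphi_{B,1}$ of $(\ref{isolp})$ (whose built-in weights $\mu_1(A_j),\mu_2(B_i)$ cancel exactly against the weights defining $\phi_{ij}$), the restriction of $u_\phi$ to $S_{A,1}\times S_{B,1}$ is carried to the finite operator $\widehat u_{A,B}\colon\ell_1^n\to\ell_\infty^m$ whose matrix is precisely $(\phi_{ij})$. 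Since $\alpha'_{q,p'}\le\|\cdot\|_{\wedge}$ and simple tensors are dense in $L^1(\Omega_1)\otimes L^1(\Omega_2)$ while $\langle u_\phi,\cdot\rangle$ is $\|\cdot\|_{\wedge}$-continuous, an approximation argument yields $L_{q,p'}(u_\phi) = \sup_{n,m,A,B}L_{q,p'}(\widehat u_{A,B})$.

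It then suffices to prove the finite-dimensional identity
\begin{equation*}
\|T_c\|_{\mathcal{B}(\ell_p^n,\ell_q^m)} = L_{q,p'}(c),
\end{equation*}
where on the left $c=(c_{ij})$ is a Schur multiplier and on the right it is the operator $\ell_1^n\to\ell_\infty^m$ with the same matrix. I would compute both sides by trace duality. Since $\mathcal{B}(\ell_p^n,\ell_q^m)=(\ell_p^n\overset{\wedge}{\otimes}\ell_{q'}^m)^*$ by $(\ref{dualproj})$, one has $\|T_c\|=\sup|\sum_{ij}c_{ij}a_{ij}b_{ij}|$ over contractions $a$ of $\mathcal{B}(\ell_p^n,\ell_q^m)$ and $b$ in the unit ball of $\ell_p^n\overset{\wedge}{\otimes}\ell_{q'}^m$. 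On the other hand, the final Proposition of Section 3, applied with the pair $(q,p')$ in place of $(p,q)$, gives $L_{q,p'}(c)=\sup|\sum_{ij}c_{ij}\gamma_{ji}\mu_i\nu_j|$ over $\|\gamma\colon\ell_{q'}^m\to\ell_{p'}^n\|\le1$, $\|\mu\|_{\ell_{q'}^m}\le1$ and $\|\nu\|_{\ell_p^n}\le1$. To obtain $L_{q,p'}(c)\le\|T_c\|$, I would take $a$ to be the transpose of $\gamma$ (so $a_{ij}=\gamma_{ji}$ and $\|a\|_{\mathcal{B}(\ell_p^n,\ell_q^m)}=\|\gamma\|\le1$) together with the rank-one tensor $b=\nu\otimes\mu\in\ell_p^n\overset{\wedge}{\otimes}\ell_{q'}^m$, whose projective norm is $\|\nu\|_{\ell_p^n}\|\mu\|_{\ell_{q'}^m}\le1$; then $a_{ij}b_{ij}=\gamma_{ji}\mu_i\nu_j$. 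For the reverse inequality I would expand any admissible $b$ as $b=\sum_k x^{(k)}\otimes y^{(k)}$ with $\sum_k\|x^{(k)}\|_p\|y^{(k)}\|_{q'}$ close to $\|b\|_{\wedge}$, set $\gamma$ equal to the transpose of $a$, normalize each $x^{(k)},y^{(k)}$ into the balls of $\ell_p^n$ and $\ell_{q'}^m$, and bound each resulting term by $L_{q,p'}(c)$; summing gives $\|T_c\|\le L_{q,p'}(c)$.

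Chaining the three equalities yields
\begin{equation*}
\|T_\phi\| = \sup_{n,m,A,B}\|T_{\phi_{A,B}}\| = \sup_{n,m,A,B}L_{q,p'}(\widehat u_{A,B}) = L_{q,p'}(u_\phi),
\end{equation*}
and in particular one side is finite exactly when the other is, which is the claimed equivalence. The step I expect to require the most care is the reduction of the factorable norm to the finite compressions: one must justify the passage to the $1$-complemented finite-dimensional subspaces through Lemma \ref{gammapinv}, verify that the weights of the $L^1$-isometries cancel so that $\widehat u_{A,B}$ genuinely has matrix $(\phi_{ij})$, and control the approximation of an arbitrary $\xi\in L^1(\Omega_1)\otimes L^1(\Omega_2)$ by simple tensors simultaneously in the $\alpha'_{q,p'}$-norm and against the functional $u_\phi$, so that no norm degenerates and the two suprema coincide even when both are infinite. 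The hypothesis $q\le p$ enters precisely in making $\tfrac1q+\tfrac1{p'}\ge1$, hence in the very definition of $\alpha_{q,p'}$ and $\mathcal{L}_{q,p'}$ and in the validity of the finite-dimensional Proposition invoked above, so it underlies the whole argument.
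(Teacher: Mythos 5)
Your proposal is correct and follows essentially the same route as the paper: both reduce to finite dimensions via Proposition \ref{discret}, Lemma \ref{gammapinv}, density of simple functions together with the cancellation of the weights in the $L^1$-isometries of $(\ref{isolp})$, and both conclude with the dual description $(\ref{formulepredual})$ of the $\alpha'_{q,p'}$-unit ball. The only difference is organizational: you isolate the finite-dimensional identity $\|T_c\|_{\mathcal{B}(\ell_p^n,\ell_q^m)} = L_{q,p'}(c)$ and prove two symmetric discretization formulas, whereas the paper proves the two implications directly by pushing the factorizations from $(\ref{formulepredual})$ through the very same computations --- your contraction $a$ and rank-one tensor $b=\nu\otimes\mu$ appear there as $\tilde{c}$ and $f\otimes g$.
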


\begin{proof}
Assume first that $T_{\phi}$ extends to a bounded operator 
$$T_{\phi} : L^{p'}(\Omega_1) \overset{\vee}{\otimes} L^q(\Omega_2) \rightarrow L^{p'}(\Omega_1) \overset{\vee}{\otimes} L^q(\Omega_2)$$
with norm $\leq 1$. To prove that $u_{\phi} \in \mathcal{L}_{q,p'} (L^1 (\Omega_1), L^{\infty}(\Omega_2))$ with $L_{q,p'}(u_{\phi})\leq 1$, we have to show that for any $v=\sum_k f_k \otimes g_k \in L^1(\Omega_1) \otimes L^1(\Omega_2)$ with $\alpha'_{q,p'}(v) <1$ we have
$$|u_{\phi}(v)|=|\sum_k \left\langle u_{\phi} (f_k),g_k \right\rangle | \leq 1.$$
By density, we can assume that $f_k, g_k$ are simple functions. Hence, with the notations introduced in Section $\ref{first}$ there exist $n,m \in \mathbb{N}^*, A=(A_1,\ldots,A_n) \in \mathcal{A}_{n,\Omega_1}$ and $B=(B_1,\ldots,B_m) \in \mathcal{A}_{m,\Omega_2}$ such that, for all $k$, $f_k \in S_{A,1}$ and $g_k \in S_{B,1}$.\\
By Lemma $\ref{gammapinv}$, the $\alpha'_{q,p'}$-norm of $v$ as an element of $S_{A,1} \otimes S_{B,1}$ is less than 1.\\

Let $\varphi_{A,1} : S_{A, 1} \rightarrow \ell_1^n$ and $\psi_{B,1} : S_{B, 1} \rightarrow \ell_1^m$ the isomorphisms defined in $(\ref{isolp})$. Set $v'=\sum_k \varphi_{A,1}(f_k) \otimes \psi_{B,1}(g_k) \in \ell_1^n \otimes \ell_1^m$. Since $\varphi_{A,1}$ and $\psi_{B,1}$ are isometries, we have $\alpha'_{q,p'}(v') <1$. Using the identification $(\ref{tensorop})$, we obtain by $(\ref{formulepredual})$ that  $v'$ admits a factorization

$$\xymatrix{
    \ell_{\infty}^n \ar[r]^{v'} \ar[d]_{d_{\delta}} & \ell_1^m \ar@{<-}[d]^{d_{\gamma}} \\
    \ell_{p}^n \ar[r]_c & \ell_{q}^m
    }$$
where $\delta=(\delta_1,\ldots,\delta_n)$, $\gamma=(\gamma_1,\ldots,\gamma_m)$, $d_{\delta}$ and $d_{\gamma}$ are the operators of multiplication and
$$\|d_{\delta}\| = \|\delta\|_{\ell_{p}} = 1, \|d_{\gamma}\| = \|\gamma\|_{\ell_{q'}} = 1 \ \text{and} \|c\| < 1.$$
This factorization means that
$$v'=\sum_{i=1}^m \sum_{j=1}^n \gamma_i  c(i,j) \delta_j e_j \otimes e_i.$$

\noindent Therefore, we have
\begin{align*}
v
& =\sum_{i=1}^m \sum_{j=1}^n \gamma_i  c(i,j) \delta_j \ \varphi_{A,1}^{-1}(e_j) \otimes \psi_{B,1}^{-1}(e_i) \\
& = \sum_{i=1}^m \sum_{j=1}^n \gamma_i \dfrac{c(i,j)}{\mu_1(A_j) \mu_2(B_i)}  \delta_j \ \chi_{A_j} \otimes \chi_{B_i}.
\end{align*}
We compute
\begin{align*}
u_{\phi}(v)
& = \sum_{i=1}^m \sum_{j=1}^n \gamma_i \dfrac{c(i,j)}{\mu_1(A_j) \mu_2(B_i)}  \delta_j \left\langle u_{\phi}(\chi_{A_j}), \chi_{B_i} \right\rangle \\
& = \sum_{i=1}^m \sum_{j=1}^n \gamma_i \dfrac{c(i,j)}{\mu_1(A_j) \mu_2(B_i)}  \delta_j \left\langle T_{\phi}(\chi_{A_j} \otimes\chi_{B_i})(\chi_{A_j}), \chi_{B_i} \right\rangle
\end{align*}
Define
$$\tilde{c} = \sum_{i=1}^m \sum_{j=1}^n \tilde{c}(i,j) \chi_{A_j} \otimes\chi_{B_i} \in L^{p'}(\Omega_1) \otimes L^q(\Omega_2),$$
where $\tilde{c}(i,j)=c_{i,j} \mu_1(A_j)^{-1/p'} \mu_2(B_i)^{-1/q}$.\\
Using the identification $(\ref{tensorop})$, it is easy to check that we have
$$\tilde{c} = \psi_{B,q}^{-1} \circ c \circ \varphi_{A,p} : S_{A,p} \mapsto L^q(\Omega_2).$$
Therefore,
$$\|\tilde{c}\|_{\vee} = \|c\|.$$

We have
\begin{align*}
u_{\phi}(v)
& = \sum_{i=1}^m \sum_{j=1}^n \gamma_i \dfrac{\tilde{c}(i,j)\mu_1(A_j)^{1/p'} \mu_2(B_i)^{1/q}}{\mu_1(A_j) \mu_2(B_i)}  \delta_j \left\langle T_{\phi}(\chi_{A_j} \otimes\chi_{B_i})(\chi_{A_j}), \chi_{B_i} \right\rangle \\
& = \sum_{i=1}^m \sum_{j=1}^n \gamma_i \tilde{c}(i,j)\mu_1(A_i)^{-j1/p} \mu_2(B_i)^{-1/q'} \delta_j \left\langle T_{\phi}(\chi_{A_j} \otimes\chi_{B_i})(\chi_{A_j}), \chi_{B_i} \right\rangle \\
& = \sum_{i=1}^m \sum_{j=1}^n \left\langle T_{\phi}(\tilde{c}(i,j) \chi_{A_j} \otimes\chi_{B_i})\left(\dfrac{\delta_j}{\mu_1(A_j)^{1/p}}\chi_{A_j}\right), \dfrac{\gamma_i}{\mu_2(B_i)^{1/q'}} \chi_{B_i} \right\rangle \\
& = \left\langle T_{\phi}(\tilde{c})(f), g \right\rangle_{L^q(\Omega_2), L^{q'}(\Omega_2)},
\end{align*}
where
$$f = \sum_j \dfrac{\delta_j}{\mu_1(A_j)^{1/p}}\chi_{A_j} \ \ \text{and} \ \ g=\sum_i \dfrac{\gamma_i}{\mu_2(B_i)^{1/q'}} \chi_{B_i}.$$
Since $\|T_{\phi}\| \leq 1$, we deduce that
\begin{align*}
|u_{\phi}(v)| \leq \|T_{\phi}(\tilde{c})\| \|f\|_p \|g\|_{q'} \leq \|\tilde{c}\| \|\delta\|_{\ell_p} \|\gamma\|_{\ell_{q'}} = \|c\| \leq 1.
\end{align*}

\vspace{0.3cm}

Conversely, assume that $u_{\phi} \in \mathcal{L}_{q,p'}(L^1 (\Omega_1), L^{\infty}(\Omega_2))$ with $L_{q,p'}(u_{\phi}) \leq 1$. To prove that $\phi$ is a Schur multiplier, we will use Proposition $\ref{discret}$. Let $n,m \in \mathbb{N}^*$, $A=(A_1,\ldots,A_n) \in \mathcal{A}_{n,\Omega_1}$ and $B=(B_1,\ldots,B_m) \in \mathcal{A}_{m,\Omega_2}$. Set
$$\phi_{ij} = \dfrac{1}{\mu_1(A_j)\mu_2(B_i)} \displaystyle \int_{A_j \times B_i} \phi \ \text{d}\mu_1 \text{d}\mu_2.$$
We want to show that the Schur multiplier on $\mathcal{B}(\ell_p^n,\ell_q^m)$ associated to the family $m = (\phi_{ij})_{i,j}$ has a norm less than $1$. To prove that, let $c=\sum_{i,j} c(i,j) e_j \otimes e_i \in \mathcal{B}(\ell_p^n,\ell_q^m), x=(x_j)_{j=1}^n, y=(y_i)_{i=1}^m$ in $\mathbb{C}$ be such that $\|c\| \leq 1, \|x\|_{\ell_p^n} =1, \|y\|_{\ell_{q'}} = 1$. We have to show that
$$| \left\langle [T_m(c)](x),y \right\rangle_{\ell_q^m, \ell_{q'}^m} | \leq 1.$$ 
This inequality can be rewritten as
\begin{equation}\label{proof1}
\left| \sum_{i,j} c(i,j) \dfrac{x_j y_i}{\mu_1(A_j)\mu_2(B_i)} \left( \int_{A_j \times B_i} \phi \right) \right| \leq 1.
\end{equation}
Let $v = \sum_{i,j} x_j c(i,j) y_i e_j \otimes e_i$. According to $(\ref{formulepredual})$, $\alpha'_{q,p'}(v) \leq 1$. Now, let $\tilde{v} = \sum_{i,j} x_j c(i,j) y_i \varphi_{A,1}^{-1}(e_j) \otimes \psi_{B,1}^{-1}(e_i)$. We have
$$\alpha'_{q,p'}(\tilde{v}) = \alpha'_{q,p'}(v) \leq 1$$
and
$$\tilde{v} = \sum_{i,j} \dfrac{x_j c(i,j) y_i}{\mu_1(A_j)\mu_2(B_i)} \chi_{A_j} \otimes \chi_{B_i}.$$
By assumption, $L_{q,p'}(u_{\phi}) \leq 1$, which implies that
\begin{align*}
| \left\langle u_{\phi}, \tilde{v} \right\rangle |
& =  \left| \sum_{i,j} c(i,j) \dfrac{x_j y_i}{\mu_1(A_j)\mu_2(B_i)} \left( \int_{A_j \times B_i} \phi \right) \right| \\
& \leq \alpha'_{q,p'}(\tilde{v}) \\
& \leq 1,
\end{align*}
and this is precisely the inequality $(\ref{proof1})$.
\end{proof}

\vspace{0.5cm}

\noindent Theorem $\ref{Facto2}$ and Remark $\ref{Facto}$ allow us to reformulate the previous theorem. The following two corollaries are generalizations of Theorem $\ref{FactoPisier}$.

\begin{corollary}\label{Schurfacto}
Let $(\Omega_1,\mu_1)$ and $(\Omega_2, \mu_2)$ be two localizable measure spaces and let $\phi\in L^{\infty}(\Omega_1 \times \Omega_2)$. Let $1\leq q \leq p \leq \infty$. The following statements are equivalent :
\begin{enumerate}
\item[(i)] $\phi$ is a Schur multiplier on $\mathcal{B}(L^p (\Omega_1), L^q (\Omega_2))$.\
\item[(ii)] There are a measure space (a probability space when $p\neq q$) $(\Omega,\mu)$, operators $R\in \mathcal{B}(L^1(\Omega_1),L^{p}(\mu))$ and $S\in \mathcal{B}(L^q(\mu),L^{\infty}(\Omega_2))$ such that $u_{\phi}=S\circ I \circ R$

$$\xymatrix{
    L^1(\Omega_1) \ar[r]^{u_{\phi}} \ar[d]_R & L^{\infty}(\Omega_2) \ar@{<-}[d]^S \\
    L^{p}(\mu) \ar@{^{(}->}[r]_I & L^q(\mu)
    }$$
    where $I$ is the inclusion mapping.
\end{enumerate}
In the following cases, $(i)$ and $(ii)$ are equivalent to :\\
If $1<q\leq p<+\infty$ :
\begin{enumerate}
\item[(iii)] There are a measure space (a probability space when $p\neq q$) $(\Omega,\mu)$, $a\in L^{\infty}(\mu_1, L^{p}(\mu))$ and $b\in L^{\infty}(\mu_2, L^{q'}(\mu))$ such that, for almost every $(s,t) \in \Omega_1  \times \Omega_2$,
$$\phi(s,t)=\left\langle a(s), b(t) \right\rangle.$$
\end{enumerate}
If $1=q < p<+\infty$ :
\begin{enumerate}
\item[(iii)] There are a probability space $(\Omega,\mu)$, $a\in L^{\infty}(\mu_1 \times \mu)$ and $b\in L^{\infty}(\mu_2, L^{q'}(\mu))$ such that for almost every $(s,t) \in \Omega_1 \times \Omega_2$,
$$\phi(s,t) = \left\langle a(s,\cdot), b(t) \right\rangle.$$
\end{enumerate}
If $1 < q < +\infty$ and $p=+\infty$ :
\begin{enumerate}
\item[(iii)] There are a probability space $(\Omega,\mu)$, $a\in L^{\infty}(\mu_1, L^{p}(\mu))$ and $b\in L^{\infty}(\mu_2 \times \mu)$ such that for almost every $(s,t) \in \Omega_1 \times \Omega_2$,
$$\phi(s,t) = \left\langle a(s), b(t, \cdot) \right\rangle.$$
\end{enumerate}
If $q=1$ and $p=+\infty$ :
\begin{enumerate}
\item[(iii)] There are a probability space $(\Omega,\mu)$, $a\in L^{\infty}(\mu_1 \times \mu)$ and $b\in L^{\infty}(\mu_2 \times \mu)$ such that for almost every $(s,t) \in \Omega_1 \times \Omega_2$,
$$\phi(s,t) = \left\langle a(s,\cdot), b(t, \cdot) \right\rangle.$$
\end{enumerate}
\vspace{0.2cm}
In this case, $\|T_{\phi}\| = \inf \|R\| \|I\| \|S\| = \inf \|a\| \|b\|$.
\end{corollary}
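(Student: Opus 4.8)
The plan is to establish the two equivalences $(i)\Leftrightarrow(ii)$ and $(ii)\Leftrightarrow(iii)$ separately, invoking in each case a result already proved. The only global bookkeeping is the relabelling $(p,q)\rightsquigarrow(q,p')$: the space $\mathcal{L}_{q,p'}$ plays the role of the generic space $\mathcal{L}_{p,q}$. Under this substitution the exponent $q'$ of Theorem $\ref{Facto2}$ becomes $(p')'=p$, so that the inclusion in the factorization is $I\colon L^p(\mu)\to L^q(\mu)$; this is legitimate precisely because $p=(p')'\geq q$, i.e.\ because $q\leq p$, our standing hypothesis.

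For $(i)\Leftrightarrow(ii)$, I would chain Theorem $\ref{multipliers}$ and Theorem $\ref{Facto2}$. Theorem $\ref{multipliers}$ identifies ``$\phi$ is a Schur multiplier on $\mathcal{B}(L^p(\Omega_1),L^q(\Omega_2))$'' with ``$u_\phi\in\mathcal{L}_{q,p'}(L^1(\Omega_1),L^\infty(\Omega_2))$'', together with the isometric equality $\|T_\phi\|=L_{q,p'}(u_\phi)$. Applying Theorem $\ref{Facto2}$ to $T=u_\phi$, $X=L^1(\Omega_1)$, $Y^*=L^\infty(\Omega_2)$, one finds that its condition $(ii)$ is exactly the factorization $u_\phi=S\circ I\circ R$ with $R\in\mathcal{B}(L^1(\Omega_1),L^p(\mu))$, $S\in\mathcal{B}(L^q(\mu),L^\infty(\Omega_2))$ and $I$ the inclusion, which is the diagram displayed in $(ii)$ here. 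The accompanying identity $L_{q,p'}(u_\phi)=\inf\|S\|\,\|R\|$ combines with $\|T_\phi\|=L_{q,p'}(u_\phi)$ to give $\|T_\phi\|=\inf\|R\|\,\|I\|\,\|S\|$, since $\|I\|=1$. The probability-space refinement is automatic, because $p\neq q$ is equivalent to $\frac{1}{q}+\frac{1}{p'}>1$, which is exactly the condition under which Theorem $\ref{Facto2}$ yields a probability space.

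For $(ii)\Leftrightarrow(iii)$, I would read off the pointwise formula from Remark $\ref{Facto}$, again under $(p,q)\rightsquigarrow(q,p')$ and with $\lambda=\mu_1$, $\nu=\mu_2$, $\psi=\phi$. When $1<p<\infty$ the space $L^p(\mu)$ has the Radon--Nikodym property, so by $(\ref{Linftensorcor})$ the operator $R$ is given by a kernel $a\in L^\infty(\mu_1,L^p(\mu))$ with $R(f)=\int_{\Omega_1}f(s)a(s)\,\text{d}\mu_1(s)$; when $1<q<\infty$ the operator $S$ is given by $b\in L^\infty(\mu_2,L^{q'}(\mu))$ with $S(g)=\langle g,b(\cdot)\rangle$. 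Unwinding $u_\phi=S\circ I\circ R$ with these representations collapses to $\phi(s,t)=\langle a(s),b(t)\rangle$ almost everywhere, and the translation is plainly reversible, so $(ii)$ and $(iii)$ are equivalent. Since both representations are isometric, $\|R\|=\|a\|$ and $\|S\|=\|b\|$, upgrading the previous norm identity to $\|T_\phi\|=\inf\|a\|\,\|b\|$. The four configurations differ only in which exponent is at the boundary: when $p=\infty$ the space $L^p(\mu)=L^\infty(\mu)$ loses the Radon--Nikodym property and $R$ is represented through $\mathcal{B}(L^1(\Omega_1),L^\infty(\mu))=L^\infty(\mu_1\times\mu)$ via $(\ref{LinfB})$, so that $a\in L^\infty(\mu_1\times\mu)$; symmetrically, when $q=1$ one represents $S\in\mathcal{B}(L^1(\mu),L^\infty(\Omega_2))$ as an element $b\in L^\infty(\mu_2\times\mu)$ via $(\ref{LinfB})$. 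The affected slot of the pairing then carries the extra variable, reading $\langle a(s,\cdot),b(t)\rangle$ when $p=\infty$ and $\langle a(s),b(t,\cdot)\rangle$ when $q=1$.

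The norm bookkeeping and the collapse of $S\circ I\circ R$ to the pointwise pairing are routine, mirroring the simple-function computations already done for Theorem $\ref{multipliers}$. The only step demanding real care is the boundary behaviour just described: there the relevant $L^p$-space has no Radon--Nikodym property, so one must substitute $(\ref{LinfB})$ for the reflexive Bochner duality, and in particular the subcase $q=1$ is not literally listed in Remark $\ref{Facto}$ and must be treated by hand through $(\ref{LinfB})$ applied to $S$. Since the substantive input, Theorem $\ref{Facto2}$, is already in hand, no further hard analysis is needed.
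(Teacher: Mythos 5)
Your proposal is correct and is essentially the paper's own argument: the paper offers no separate proof of this corollary beyond the sentence preceding it, and what that sentence prescribes is exactly your chain --- Theorem \ref{multipliers} (Schur multiplier $\Leftrightarrow u_\phi\in\mathcal{L}_{q,p'}(L^1(\Omega_1),L^{\infty}(\Omega_2))$, with $\|T_\phi\|=L_{q,p'}(u_\phi)$), then Theorem \ref{Facto2} under the relabelling $(p,q)\rightsquigarrow(q,p')$ (which turns the inclusion into $I\colon L^p(\mu)\to L^q(\mu)$ and the probability-space condition $\tfrac{1}{q}+\tfrac{1}{p'}>1$ into $p\neq q$), and finally the kernel representations of Remark \ref{Facto} via $(\ref{Linftensorcor})$ and $(\ref{LinfB})$, with the norm bookkeeping $\|T_\phi\|=L_{q,p'}(u_\phi)=\inf\|S\|\,\|R\|=\inf\|a\|\,\|b\|$.

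One further point, in your favour: in the two intermediate boundary cases your conclusions differ from the corollary as printed, and it is the printed statement that is off. Unwinding $u_\phi=S\circ I\circ R$ gives, for $q=1<p<\infty$: $a\in L^{\infty}(\mu_1,L^{p}(\mu))$ (RNP representation of $R$) and $b\in L^{\infty}(\mu_2\times\mu)$ (representation of $S\in\mathcal{B}(L^1(\mu),L^{\infty}(\Omega_2))$ via $(\ref{LinfB})$), with $\phi(s,t)=\langle a(s),b(t,\cdot)\rangle$; and for $1<q<\infty$, $p=\infty$: $a\in L^{\infty}(\mu_1\times\mu)$ (now it is $R$ that goes through $(\ref{LinfB})$) and $b\in L^{\infty}(\mu_2,L^{q'}(\mu))$, with $\phi(s,t)=\langle a(s,\cdot),b(t)\rangle$ --- exactly as you state. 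The printed corollary interchanges the contents of these two cases; the appearance of the Bochner space $L^{\infty}(\mu_1,L^{p}(\mu))$ with $p=\infty$ in its second boundary case already signals the slip (compare also the labels ``$p=+\infty$'' in cases (2) and (3) of Remark \ref{Facto}, whose contents --- $S$ represented through $(\ref{LinfB})$ --- correspond to $p=1$). So your boundary-case analysis, including the observation that the $q=1$ configuration must be handled directly by applying $(\ref{LinfB})$ to $S$ rather than by quoting the Remark verbatim, is the correct reading of the statement, not a gap.
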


\vspace{0.3cm}

\begin{remark}
In the previous corollary, the condition $(ii)$ implies that every $\phi \in L^{\infty}(\Omega_1 \times \Omega_2)$ is a Schur multiplier on $\mathcal{B}(L^1(\Omega_1), L^1(\Omega_2))$ and on $\mathcal{B}(L^{\infty}(\Omega_1), L^{\infty}(\Omega_2))$.
\end{remark}

\vspace{0.3cm}

\noindent In the discrete case, the previous corollary can be reformulated as follow.

\begin{corollary}\label{Schurfacto2}
Let $\phi=(c_{ij})_{i,j\in \mathbb{N}} \subset \mathbb{C}$, $C\geq 0$ be a constant and let $1 \leq q \leq p \leq +\infty$. The following are equivalent :
\begin{enumerate}
\item[(i)] $\phi$ is a Schur multiplier on $\mathcal{B}(\ell_p,\ell_q)$ with norm $\leq C$.
\item[(ii)] There exist a measure space (a probability space when $p\neq q$) $(\Omega, \mu)$ and two bounded sequences $(x_j)_j$ in $L^p(\mu)$ and $(y_i)_i$ in $L^{q'}(\mu)$ such that
$$\forall i,j \in \mathbb{N}, c_{ij} = \left\langle x_j, y_i \right\rangle \ \text{and} \ \sup_i \|y_i\|_{q'} \sup_j \|x_j\|_p \leq C.$$
\end{enumerate}
\end{corollary}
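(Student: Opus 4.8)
The plan is to read Corollary \ref{Schurfacto2} off the discrete specialization of Theorem \ref{multipliers} and Theorem \ref{Facto2} (equivalently, of Corollary \ref{Schurfacto}). I set $\Omega_1=\Omega_2=\mathbb{N}$ with the counting measures, so that $L^1(\Omega_1)=\ell_1$, $L^{\infty}(\Omega_2)=\ell_\infty$, and $\phi\in L^\infty(\mathbb{N}^2)$ is exactly the family $(c_{ij})$ with $c_{ij}=\phi(j,i)$; correspondingly $u_\phi\colon \ell_1\to\ell_\infty$ has matrix $(c_{ij})$, since $u_\phi(e_j)(i)=\phi(j,i)=c_{ij}$. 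By Theorem \ref{multipliers}, $\phi$ is a Schur multiplier on $\mathcal{B}(\ell_p,\ell_q)$ with $\|T_\phi\|\le C$ if and only if $u_\phi\in\mathcal{L}_{q,p'}(\ell_1,\ell_\infty)$ with $L_{q,p'}(u_\phi)\le C$. It therefore remains to reinterpret, in the sequence setting, the factorization $u_\phi=S\circ I\circ R$ of Theorem \ref{Facto2} through $\ell_1\xrightarrow{R}L^p(\mu)\hookrightarrow L^q(\mu)\xrightarrow{S}\ell_\infty$.

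The key point is that the two end factors are governed by the universal properties of $\ell_1$ (as a domain) and of $\ell_\infty$ (as a codomain), which replaces the Bochner-space identifications of Remark \ref{Facto}. First, an operator $R\in\mathcal{B}(\ell_1,L^p(\mu))$ is exactly the data of the bounded sequence $x_j:=R(e_j)$ in $L^p(\mu)$, and $\|R\|=\sup_j\|x_j\|_p$. Second, since $\ell_\infty=(\ell_1)^*$, an operator $S\in\mathcal{B}(L^q(\mu),\ell_\infty)$ is given by a bounded sequence of functionals $y_i\in L^q(\mu)^*=L^{q'}(\mu)$ via $S(g)(i)=\langle y_i,g\rangle$, with $\|S\|=\sup_i\|y_i\|_{q'}$; this uses $1\le q<\infty$ so that $L^q(\mu)^*=L^{q'}(\mu)$. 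Composing, and recalling that $I$ is the inclusion, I compute $c_{ij}=u_\phi(e_j)(i)=S(R(e_j))(i)=\langle y_i,x_j\rangle=\langle x_j,y_i\rangle$, which is the asserted formula, while $\|R\|\,\|S\|=\sup_j\|x_j\|_p\,\sup_i\|y_i\|_{q'}$. The direction (ii)$\Rightarrow$(i) is then immediate: sequences as in (ii) define $R,S$ with $\|R\|\,\|S\|\le C$, whence $L_{q,p'}(u_\phi)\le C$ and $\|T_\phi\|\le C$. For (i)$\Rightarrow$(ii) I invoke $L_{q,p'}(u_\phi)=\inf\|R\|\,\|S\|$ from Theorem \ref{Facto2}, the infimum being attained as is standard for factorable operators, and translate the optimal factorization into the desired sequences with matching constant.

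The one case needing separate treatment is $q=\infty$, which forces $p=\infty$ and in which $L^q(\mu)^*=L^\infty(\mu)^*$ is strictly larger than $L^{q'}(\mu)=L^1(\mu)$, so the duality description of $S$ above fails. Here $I$ is the identity and I factor $u_\phi$ directly through $\ell_\infty$ itself (viewed as $L^\infty$ of the counting measure): take $R=u_\phi$, that is $x_j=(c_{ij})_i\in\ell_\infty$, and $S=\mathrm{id}$, that is $y_i=e_i\in\ell_1$, which already yields $c_{ij}=\langle x_j,y_i\rangle$ with $\sup_j\|x_j\|_\infty\,\sup_i\|y_i\|_1=\sup_{i,j}|c_{ij}|=\|u_\phi\|=\|T_\phi\|$. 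The main obstacle is thus not a hard estimate but the bookkeeping: tracking the transposition $c_{ij}=\phi(j,i)$ and the conjugate exponents so that the left factor lands in $L^p(\mu)$ and the right one in $L^{q'}(\mu)$, and ensuring that the endpoint identifications (together with attainment of the infimum defining $L_{q,p'}$) remain valid uniformly over $1\le q\le p\le\infty$, the degenerate endpoint $q=\infty$ being handled by the explicit factorization just described.
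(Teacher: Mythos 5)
Your proof is correct and takes essentially the same route as the paper, which obtains this corollary precisely by specializing Theorem \ref{multipliers} and the factorization Theorem \ref{Facto2} (through Corollary \ref{Schurfacto}) to counting measures on $\mathbb{N}$. The only cosmetic difference is that you translate the factors $R\in\mathcal{B}(\ell_1,L^p(\mu))$ and $S\in\mathcal{B}(L^q(\mu),\ell_\infty)$ into sequences via the universal properties of $\ell_1$ and coordinate functionals of $\ell_\infty$, rather than via the Bochner-space identifications of Remark \ref{Facto} specialized to the discrete case; this is equivalent bookkeeping, and your explicit handling of the endpoint $q=p=\infty$ (and your flagging of the attainment of the infimum in Theorem \ref{Facto2}) is at the same level of rigor as the paper's own, essentially unwritten, argument.
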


\vspace{0.2cm}

\subsection{An application : the main triangle projection}
Let $m_{ij} = 1$ if $i\leq j$ and $m_{ij} = 0$ otherwise. Let $T_m$ be the Schur multiplier associated with the family $m=(m_{ij})$. For any infinite matrix $A=[a_{ij}]$, $T_m(A)$ is the matrix $[b_{ij}]$ with $b_{ij}=a_{ij}$ if $i\leq j$ and $b_{ij}=0$ otherwise. For that reason, $T_m$ is called the main triangle projection. Similary, we define the \textit{n}-th main triangle projection as the Schur multiplier on $\mathcal{M}_n(\mathbb{C})$ associated with the family $m_n = (m_{ij}^n)_{1 \leq i,j \leq n}$ where $m_{ij}^n = 1$ if $i\leq j$ and $m_{ij}^n = 0$ otherwise.
In \cite{Kwapien}, Kwapie\'n and Pelczy\'nski proved that if $1\leq q \leq p \leq +\infty, p\neq 1, q\neq +\infty$, there exists a constant $K > 0$ such that for all $n$,
$$\| T_{m_n} : \mathcal{B}(\ell_p^n, \ell_q^n) \rightarrow \mathcal{B}(\ell_p^n, \ell_q^n)\| \geq K \ln (n),$$
and this order of growth is obtained for the Hilbert matrices. Those estimates imply that $T_m$ is not bounded on $\mathcal{B}(\ell_p, \ell_q)$.
Bennett proved in \cite{Bennett2} that when $1<p<q<\infty$, $T_m$ is bounded from $\mathcal{B}(\ell_p, \ell_q)$ into itself.

The results obtained in subsection $\ref{main}$ allow us to give a very short proof of the unbounded case.

\begin{proposition}
Let $1 \leq q \leq p \leq +\infty,p\neq 1, q\neq +\infty$. Then $T_m$ is not bounded on $\mathcal{B}(\ell_p, \ell_q)$.
\end{proposition}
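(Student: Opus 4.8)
The plan is to exploit the characterization of Schur multiplier norm in terms of the finite-dimensional truncations, together with the known lower bound of Kwapie\'n and Pelczy\'nski on the $n$-th main triangle projection. The key observation is that the $n$-th main triangle projection $T_{m_n}$ on $\mathcal{B}(\ell_p^n, \ell_q^n)$ is exactly the finite-dimensional Schur multiplier $T_{\phi_{A,B}}$ that arises in Proposition \ref{discret} when $\Omega_1 = \Omega_2 = \mathbb{N}$ with the counting measures, the function $\phi$ is the infinite upper-triangular matrix $m=(m_{ij})$, and $A = B = (\{1\}, \ldots, \{n\})$ are the first $n$ singletons.

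First I would apply Proposition \ref{discret} to $\phi = m$ viewed as an element of $L^\infty(\mathbb{N}^2)$. Since each singleton has counting measure $1$, the averaging formula $\phi_{ij} = \frac{1}{\mu_1(A_j)\mu_2(B_i)}\int_{A_j \times B_i}\phi$ collapses to $\phi_{ij} = m_{ij}$, so the family $\phi_{A,B}$ associated to the choice $A = B = (\{1\},\ldots,\{n\})$ is precisely $m_n$, and $T_{\phi_{A,B}} = T_{m_n}$ acting on $\mathcal{B}(\ell_p^n, \ell_q^n)$. Proposition \ref{discret} asserts that $\phi$ is a Schur multiplier on $\mathcal{B}(\ell_p, \ell_q)$ if and only if the norms $\|T_{\phi_{A,B}}\|$ are uniformly bounded over all choices of $n, m, A, B$, and that in this case the multiplier norm equals the supremum of these finite-dimensional norms. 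Hence, to prove that $T_m$ is \emph{not} bounded, it suffices to produce a single sequence of finite-dimensional truncations whose norms are unbounded.

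The second step is then immediate: the Kwapie\'n--Pelczy\'nski estimate quoted in the subsection states that under the hypothesis $1 \leq q \leq p \leq +\infty$, $p \neq 1$, $q \neq +\infty$, there is a constant $K > 0$ with $\|T_{m_n} : \mathcal{B}(\ell_p^n, \ell_q^n) \to \mathcal{B}(\ell_p^n, \ell_q^n)\| \geq K \ln(n)$ for all $n$. Taking $A = B = (\{1\},\ldots,\{n\})$ in the supremum of Proposition \ref{discret} gives
$$
\sup_{n,m,A,B} \|T_{\phi_{A,B}}\| \geq \sup_n \|T_{m_n}\| \geq \sup_n K \ln(n) = +\infty.
$$
By Proposition \ref{discret} this supremum is finite precisely when $\phi$ is a Schur multiplier, so the divergence forces $T_m$ to be unbounded on $\mathcal{B}(\ell_p, \ell_q)$.

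The proof is essentially a bookkeeping argument: the only nontrivial content is imported from the cited Kwapie\'n--Pelczy\'nski result and from Proposition \ref{discret}. The main point requiring care is the identification of $T_{\phi_{A,B}}$ with the genuine $n$-th main triangle projection $T_{m_n}$, i.e.\ checking that the normalizing measure factors all equal $1$ for singletons so that no spurious weights appear; this is exactly why choosing singletons (rather than general disjoint sets) is the clean choice. There is also a minor indexing subtlety, since in the identification of classical Schur multipliers the paper sets $c_{ij} = \phi(j,i)$, so one should verify that the upper-triangular pattern is preserved (or harmlessly transposed) under this convention --- but transposition does not affect boundedness, so this causes no real obstacle.
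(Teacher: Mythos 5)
Your proof is correct, but it is genuinely different from the one in the paper --- in fact it is essentially the classical argument that the paper itself acknowledges as known: right after quoting the Kwapie\'n--Pelczy\'nski bound $\|T_{m_n}\| \geq K\ln(n)$, the paper remarks ``Those estimates imply that $T_m$ is not bounded,'' which is precisely your proof, and the stated purpose of the section is to give a \emph{new}, ``very short'' proof that avoids those estimates. Your route combines Proposition \ref{discret} (really only the direction (i)$\Rightarrow$(ii), which for singletons amounts to the elementary fact that padding a finite matrix with zeros embeds $\mathcal{B}(\ell_p^n,\ell_q^n)$ isometrically into $\mathcal{B}(\ell_p,\ell_q)$ compatibly with the multiplier) with the quantitative logarithmic lower bound, whose proof is the hard content and is only cited, not reproved, in the paper. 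The paper instead argues by contradiction from its main result: if $T_m$ were bounded, Corollary \ref{Schurfacto2} would give bounded sequences $(a_j)_j \subset L^p(\mu)$ and $(b_i)_i \subset L^{q'}(\mu)$ with $m_{ij} = \langle a_j, b_i\rangle$; taking weak-$*$ accumulation points $a$ and $b$ (this is where the hypotheses $p \neq 1$, $q \neq +\infty$ enter, making $L^p$ and $L^{q'}$ dual spaces) and using the triangular pattern, one gets $\langle a,b\rangle = 1$ from the region $i \leq j$ and $\langle a,b\rangle = 0$ from the region $i > j$, a contradiction. The trade-off: your argument yields the quantitative growth information but rests on an external hard theorem, while the paper's argument is soft, self-contained given its factorization theorem, and showcases that theorem's strength --- indeed the paper reuses the same accumulation-point trick in the subsequent remark to show that for $p < q$ no factorization as in (\ref{factotriangle}) can exist even though $T_m$ is then bounded. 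Note also that your hypotheses are used in your proof only through the hypotheses of the cited Kwapie\'n--Pelczy\'nski estimate, whereas in the paper's proof they play a visible structural role.
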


\begin{proof}
Assume that $T_m$ is bounded on $\mathcal{B}(\ell_p, \ell_q)$. By Corollary $\ref{Schurfacto}$, there exist a measure space $(\Omega, \mu)$, $(a_n)_n \in L^p(\mu)$ and $(b_n)_n \in L^{q'}(\mu)$ two bounded sequences such that, for all $i,j \in \mathbb{N}$,
\begin{equation}\label{factotriangle}
m_{ij}=\left\langle a_j, b_i \right\rangle.
\end{equation}
By boundedness, $(a_n)_n$ and $(b_n)_n$ admit an accumluation point $a\in L^p(\mu)$ and $b\in L^{q'}(\mu)$ respectively for the weak-* topology. Fix $i \in \mathbb{N}$. For all $j\geq i$, we have
$$\left\langle a_i, b_j \right\rangle  = 1$$
so that we get
$$\left\langle a_i ,b \right\rangle = 1.$$
This equality holds for any $i$ hence
$$\left\langle a ,b \right\rangle = 1.$$
Now fix $j\in \mathbb{N}$. For all $i>j$ we have
$$\left\langle a_i, b_j \right\rangle = 0.$$
From this, we deduce as above that
$$\left\langle a, b \right\rangle = 0.$$
We obtained a contradiction so $T_m$ cannot be bounded.
\end{proof}

\noindent As a consequence, we have, by Proposition $\ref{discret}$ :

\begin{corollary} Let $1 \leq q \leq p \leq +\infty,p\neq 1, q\neq +\infty$.
Let $\Omega_1 = \Omega_2 = \mathbb{R}$ with the Lebesgue measure. Then $\phi \in L^{\infty}(\mathbb{R}^2)$ defined by
\begin{align*}
\phi(s,t) :=
\begin{cases}1, & \text{if~$s+t \geq 0$} \\
0 & \text{if~$s+t <0$}
\end{cases}, \ \ s, t \in\mathbb{R} 
\end{align*}
is not a Schur multiplier on $\mathcal{B}(L^p(\mathbb{R}), L^q(\mathbb{R}))$.
\end{corollary}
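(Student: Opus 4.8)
The plan is to invoke Proposition \ref{discret}: $\phi$ is a Schur multiplier on $\mathcal{B}(L^p(\mathbb{R}),L^q(\mathbb{R}))$ if and only if the discretized matrices $\phi_{A,B}$ induce Schur multipliers on the various $\mathcal{B}(\ell_p^n,\ell_q^m)$ whose norms are uniformly bounded over all $n,m$ and all partitions $A,B$. I would argue by contradiction: assuming $\phi$ is a Schur multiplier, I would exhibit, for each $n$, partitions $A\in\mathcal{A}_{n,\mathbb{R}}$ and $B\in\mathcal{A}_{n,\mathbb{R}}$ (so here $m=n$) for which $\phi_{A,B}$ is \emph{exactly} the $n$-th main triangle projection matrix $m_n$. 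Since $\|T_{m_n}\|\geq K\ln(n)$ by the Kwapie\'n--Pe\l czy\'nski estimate recalled above (valid precisely for $1\leq q\leq p\leq+\infty$, $p\neq 1$, $q\neq+\infty$), this violates the uniform boundedness forced by Proposition \ref{discret}, so $\phi$ cannot be a Schur multiplier.

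The heart of the argument is the construction of the partitions realizing $m_n$. On the first copy of $\mathbb{R}$ (the $s$-variable) I would take small intervals $A_j=[j-\delta,j+\delta]$ centered at the integers $j=1,\dots,n$, and on the second copy (the $t$-variable) intervals $B_i=[-i+\tfrac12-\delta,\,-i+\tfrac12+\delta]$ centered at $-i+\tfrac12$ for $i=1,\dots,n$, with $0<\delta<\tfrac14$. Because the centers on each axis are spaced one unit apart while each interval has length $2\delta<\tfrac12$, the $A_j$ are pairwise disjoint and the $B_i$ are pairwise disjoint, each of positive finite Lebesgue measure; hence $A\in\mathcal{A}_{n,\mathbb{R}}$ and $B\in\mathcal{A}_{n,\mathbb{R}}$.

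The shift by $\tfrac12$ on the $t$-axis is what makes the averages collapse to clean $0/1$ values. Indeed, for $(s,t)\in A_j\times B_i$ one has $s+t\in[\,j-i+\tfrac12-2\delta,\ j-i+\tfrac12+2\delta\,]$. If $i\leq j$, then $j-i\geq 0$ and this interval lies in $(0,+\infty)$, so $\phi\equiv 1$ on $A_j\times B_i$ and $\phi_{ij}=1$; if $i>j$, then $j-i\leq-1$ and the interval lies in $(-\infty,0)$, so $\phi\equiv 0$ on $A_j\times B_i$ and $\phi_{ij}=0$. Thus $\phi_{ij}=m_{ij}^n$ for all $i,j$, i.e.\ $\phi_{A,B}=m_n$, and in particular $T_{\phi_{A,B}}=T_{m_n}$ on $\mathcal{B}(\ell_p^n,\ell_q^n)$.

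The only obstacle here is bookkeeping rather than conceptual: one must check that the intervals are genuinely disjoint and, crucially, that the half-unit shift places the diagonal $i=j$ inside the region $\{s+t\geq 0\}$, so that $\phi_{ii}=1=m^n_{ii}$ rather than producing a fractional average near the line $s+t=0$. Once $\phi_{A,B}=m_n$ is verified, the conclusion is immediate: uniform boundedness of the $T_{\phi_{A,B}}$ would force $\sup_n\|T_{m_n}\|<+\infty$, contradicting the $\ln(n)$ lower bound, and therefore $\phi$ is not a Schur multiplier on $\mathcal{B}(L^p(\mathbb{R}),L^q(\mathbb{R}))$.
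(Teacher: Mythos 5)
Your proposal is correct: the contradiction via Proposition~\ref{discret} is exactly the mechanism the paper has in mind (the corollary is stated as a consequence of that proposition), and your interval construction is sound --- the half-unit shift of the $B_i$'s keeps every rectangle $A_j\times B_i$ entirely inside $\{s+t\geq \tfrac12-2\delta\}$ or entirely inside $\{s+t\leq -\tfrac12+2\delta\}$, so each average $\phi_{ij}$ is exactly $0$ or $1$ and $\phi_{A,B}=m_n$. The one place you diverge from the paper is the final unboundedness input. You invoke the Kwapie\'n--Pe\l czy\'nski quantitative estimate $\|T_{m_n}\|\geq K\ln(n)$, which the paper only quotes; the paper instead combines Proposition~\ref{discret} with its own Proposition that $T_m$ is unbounded on $\mathcal{B}(\ell_p,\ell_q)$, whose ``very short proof'' is self-contained: by Corollary~\ref{Schurfacto} a bounded $T_m$ would force a factorization $m_{ij}=\langle a_j,b_i\rangle$ with bounded sequences, and weak-* accumulation points of $(a_j)$ and $(b_i)$ yield $\langle a,b\rangle=1$ and $\langle a,b\rangle=0$ simultaneously. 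So the paper's route stays entirely within its new factorization machinery and needs no quantitative estimate (uniform boundedness of the $T_{m_n}$ would, by the discrete case of Proposition~\ref{discret}, make $m$ itself a Schur multiplier, contradicting that Proposition), whereas yours imports the classical logarithmic lower bound; in exchange, your argument gives the explicit growth rate and skips the weak-* compactness step. Both are complete proofs, and your write-up supplies the partition bookkeeping that the paper leaves implicit.
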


\begin{remark}
One could wonder whether the results of subsection $\ref{main}$ can be extended to the case $1\leq p < q \leq +\infty$, that is, if the boundedness of $T_{\phi}$ on $\mathcal{B}(L^p, L^q)$ implies that $u_{\phi}$ has a certain factorization. The fact that if $p<q$ the main triangle projection is bounded tells us that $m$ is a Schur multiplier on $\mathcal{B}(\ell_p, \ell_q)$. Nevertheless, the argument used in the previous proof shows that $m$ cannot have a factorization like in $(\ref{factotriangle})$. Therefore, the case $p<q$ is more tricky. For the discrete case, one can find in \cite[Theorem 4.3]{Bennett} a necessary and sufficient condition for a family $(m_{i,j}) \subset \mathbb{C}$ to be a Schur multiplier, for all values of $p$ and $q$, using the theory of $q-$absolutely summing operators.
\end{remark}

\vspace{0.3cm}


\section{Inclusion theorems}

\noindent In this section, we denote by $\mathcal{M}(p,q)$ the space of Schur multipliers on $\mathcal{B}(\ell_p,\ell_q)$.\\

\noindent First, we recall the inclusion relationships between the spaces $\mathcal{M}(p,q)$. Then we will establish new results as applications of those obtained in Section $\ref{main}$.

\begin{theorem}\cite[Theorem 6.1]{Bennett}\label{incluB}
Let $p_1 \geq p_2$ and $q_1 \leq q_2$ be given. Then $\mathcal{M}(p_1,q_1) \subset \mathcal{M}(p_2,q_2)$ with equality in the following cases:
\begin{enumerate}
\item[(i)] $p_1 = p_2 = 1$,
\item[(ii)] $q_1=q_2=\infty$,
\item[(iii)] $q_2 \leq 2 \leq p_2$,
\item[(iv)] $q_2 < p_1 = p_2 < 2$,
\item[(v)] $2 < q_1 = q_2 < p_2$.
\end{enumerate}
\end{theorem}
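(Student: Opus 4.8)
The plan is to reduce everything to the discrete factorization of Corollary~\ref{Schurfacto2} and to exploit two symmetries of the problem. First I would use the finite-dimensional reduction: by Proposition~\ref{discret} (and its classical analogue) membership in $\mathcal{M}(p,q)$ and the multiplier norm are determined by the uniform boundedness of the compressions $T_{\phi_{A,B}}$ on $\mathcal{B}(\ell_p^n,\ell_q^m)$, so it suffices to compare finite-section multiplier norms. Second, I would record the transposition symmetry $\mathcal{M}(p,q)=\mathcal{M}(q',p')$: taking adjoints gives an isometry $\mathcal{B}(\ell_p,\ell_q)\to\mathcal{B}(\ell_{q'},\ell_{p'})$ under which Schur multiplication by $m$ becomes Schur multiplication by its transpose, so the two spaces coincide. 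This symmetry exchanges the cases $(iv)$ and $(v)$ (it sends a pair $(p,q)$ with $q<p<2$ to $(q',p')$ with $2<p'<q'$), so I only need to treat one of them. The crucial observation is that all the equality cases $(iii)$--$(v)$ lie in the region $1\leq q\leq p\leq\infty$, where Corollary~\ref{Schurfacto2} is available; that factorization is the workhorse of the proof.

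For the basic inclusion I would separate the two regimes. Inside the region $q\leq p$ the inclusion is immediate from Corollary~\ref{Schurfacto2}: if $m_{ij}=\langle x_j,y_i\rangle$ with $(x_j)$ bounded in $L^{p_1}(\mu)$ and $(y_i)$ bounded in $L^{q_1'}(\mu)$ on a probability space, then for $p_2\leq p_1$ and $q_1\leq q_2$ one has $\|x_j\|_{L^{p_2}(\mu)}\leq\|x_j\|_{L^{p_1}(\mu)}$ and $\|y_i\|_{L^{q_2'}(\mu)}\leq\|y_i\|_{L^{q_1'}(\mu)}$, so the same representation witnesses $m\in\mathcal{M}(p_2,q_2)$. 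The genuinely general inclusion (which may cross the diagonal into $q>p$, where no factorization is available) is the one place I would fall back on Bennett's summing-operator characterization from \cite{Bennett}; note that a naive ``ball monotonicity'' argument fails, since in the trace-duality formula for the multiplier norm the unit ball of $\mathcal{B}(\ell_p^n,\ell_q^n)$ and that of its nuclear dual move in opposite directions as $p$ varies. Finally the equality cases $(i)$ and $(ii)$ are elementary: computing operator norms column-wise shows $\mathcal{M}(1,q)=\ell_\infty(\mathbb{N}^2)$ for every $q$, and row-wise that $\mathcal{M}(p,\infty)=\ell_\infty(\mathbb{N}^2)$ for every $p$ (the discrete form of the Remark following Corollary~\ref{Schurfacto}), which gives equality in $(i)$ and $(ii)$.

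For case $(iii)$, where $q\leq 2\leq p$, I would show every such space equals $\mathcal{M}(2,2)$. One containment, $\mathcal{M}(p,q)\subseteq\mathcal{M}(2,2)$, is the basic inclusion already handled. For the reverse containment I would use a Gaussian embedding: given $m\in\mathcal{M}(2,2)$, Theorem~\ref{FactoPisier} with $p=2$ yields $m_{ij}=\langle x_j,y_i\rangle_{\ell_2}$ with $(x_j),(y_i)$ bounded in $\ell_2$; taking i.i.d.\ standard Gaussians $(g_k)$ on a probability space $(\Omega,\mu)$ and setting $X_j=\sum_k (x_j)_k g_k$, $Y_i=\sum_k (y_i)_k g_k$, the identity $\mathbb{E}[g_kg_l]=\delta_{kl}$ gives $\int_\Omega X_jY_i\,d\mu=\langle x_j,y_i\rangle=m_{ij}$, while finiteness of Gaussian moments gives $\|X_j\|_{L^p}=\gamma_p\|x_j\|_2$ and $\|Y_i\|_{L^{q'}}=\gamma_{q'}\|y_i\|_2$ (here $p,q'<\infty$, so one treats the boundary values $p=\infty$ and $q=1$ separately by sandwiching with $(i)$--$(ii)$). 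Since $q\leq p$ the product $X_jY_i$ is integrable, so this is exactly the representation required by Corollary~\ref{Schurfacto2} and $m\in\mathcal{M}(p,q)$.

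The main obstacle is case $(v)$, and hence also $(iv)$: the independence of $\mathcal{M}(p,q)$ of $p$ for fixed $2<q<p$. The easy containment $\mathcal{M}(p_1,q)\subseteq\mathcal{M}(p_2,q)$ is the basic inclusion; the content is the reverse, $\mathcal{M}(p_2,q)\subseteq\mathcal{M}(p_1,q)$, which by Corollary~\ref{Schurfacto2} amounts to \emph{upgrading} an $L^{p_2}(\mu)$-bounded family $(x_j)$ to an $L^{p_1}(\mu)$-bounded (equivalently $L^\infty(\mu)$-bounded) one while preserving both the Gram data $\langle x_j,y_i\rangle$ and the $L^{q'}$-boundedness of $(y_i)$, with $q'<2$. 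This cannot be done by the Gaussian substitution of case $(iii)$, because the source is now $L^{p_2}$ with $p_2>2$ rather than a Hilbert space, and there is no isometric embedding $L^{p_2}\hookrightarrow L^{p_1}$ for $2<p_2<p_1$. Here I would instead invoke a genuine factorization/change-of-density result of Maurey type, or equivalently the coincidence of the factorable-operator ideals $\mathcal{L}_{q,p'}(L^1,L^\infty)$ (via Theorem~\ref{Facto2}) for all $p\in(q,\infty]$ when $q>2$; establishing this coincidence is where the real difficulty of the theorem is concentrated.
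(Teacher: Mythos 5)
First, a point of comparison: the paper does not prove this theorem at all. It is imported verbatim from Bennett (\cite[Theorem 6.1]{Bennett}) and used as a black box, so there is no internal proof to measure your attempt against; it has to stand on its own, and it does not. The serious error is your treatment of the boundary of case $(iii)$. The Gaussian substitution does prove $\mathcal{M}(2,2)\subseteq\mathcal{M}(p,q)$ for $2\leq p<\infty$, $1<q\leq 2$, but the proposed repair of the boundary values ``by sandwiching with $(i)$--$(ii)$'' cannot work: $(i)$ and $(ii)$ only assert that the corner spaces $\mathcal{M}(1,\cdot)$ and $\mathcal{M}(\cdot,\infty)$ are all of $\ell_\infty(\mathbb{N}^2)$, and since $\mathcal{M}(\infty,q)\subsetneq\ell_\infty(\mathbb{N}^2)$ for $q\neq\infty$ (the main triangle projection witnesses this, by the paper's proposition on that projection), no chain of inclusions passing through those corner spaces can land inside $\mathcal{M}(\infty,q)$ or $\mathcal{M}(p,1)$. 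A limiting argument $p\to\infty$ (or $q\to 1$) along your interior estimate also fails, because your bound carries the factor $\gamma_p\gamma_{q'}$, and $\gamma_p=\|g\|_{L^p}\sim\sqrt{p}$ blows up. This gap is not cosmetic: combining Theorem~\ref{FactoPisier} with Corollary~\ref{Schurfacto2} for $(p,q)=(\infty,1)$, the inclusion $\mathcal{M}(2,2)\subseteq\mathcal{M}(\infty,1)$ with constant $K$ says that every matrix $m_{ij}=\langle x_j,y_i\rangle$ with unit vectors $x_j,y_i\in\ell_2$ can be rewritten as $m_{ij}=\int a_jb_i\,d\mu$ with $a_j,b_i$ in the $\sqrt{K}$-ball of $L^\infty$ of a probability space; pairing against an arbitrary finite matrix $t$ and dualizing, this is exactly Grothendieck's inequality with constant $K$. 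So the boundary of $(iii)$ is the deepest part of the case, and your proposal contains no proof of it.

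Cases $(iv)$ and $(v)$ are likewise unproven: your transposition reduction of $(iv)$ to $(v)$ is correct, and you correctly locate the difficulty (upgrading the exponent $p$ in the factorization $\ell_1\to L^{p}(\mu)\hookrightarrow L^q(\mu)\to\ell_\infty$ for fixed $q>2$), but ``invoke a factorization/change-of-density result of Maurey type'' is a pointer, not an argument; the coincidence of the ideals $\mathcal{L}_{q,p'}(\ell_1,\ell_\infty)$ for all $p\in(q,\infty]$ is precisely the statement to be established. Together with the boundary of $(iii)$, this means the proposal only covers the soft region of the theorem. Two smaller remarks. In the region $q\leq p$ your monotonicity argument uses $\|\cdot\|_{L^{p_2}(\mu)}\leq\|\cdot\|_{L^{p_1}(\mu)}$, which needs $\mu$ finite; Corollary~\ref{Schurfacto2} guarantees a probability space only when $p\neq q$, so the case $p_1=q_1$ requires a preliminary reduction (pass to a separable sublattice and change density) before this works. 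Finally, the fallback on Bennett's summing characterization for the general inclusion is unnecessary: since Schur multiplication satisfies $m\cdot(D_yAD_x)=D_y(m\cdot A)D_x$ for diagonal matrices, and since by H\"older every unit vector of $\ell_{p_2}^n$ factors as $D_xu$ with $\|x\|_r\leq1$, $\|u\\|_{p_1}\leq1$ (where $1/r=1/p_2-1/p_1$), and dually $\|w\|_{q_2}=\sup\{\|D_yw\|_{q_1}:\|y\|_s\leq1\}$ with $1/s=1/q_1-1/q_2$, one gets $\|m\cdot A\|_{\ell_{p_2}^n\to\ell_{q_2}^m}\leq\|m\|_{\mathcal{M}(p_1,q_1)}\|A\|_{\ell_{p_2}^n\to\ell_{q_2}^m}$ on finite sections, which by Proposition~\ref{discret} gives the full inclusion $\mathcal{M}(p_1,q_1)\subseteq\mathcal{M}(p_2,q_2)$ elementarily.
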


Let $(\Omega_1,\mu_1)$ and $(\Omega_2, \mu_2)$ be two measure spaces. If $\mathcal{M}(p_1,q_1) \subset \mathcal{M}(p_2,q_2)$, then using Proposition $\ref{discret}$ we have that any Schur multiplier on $\mathcal{B}(L^{p_1}(\Omega_1), L^{q_1}(\Omega_2))$ is a Schur multiplier on $\mathcal{B}(L^{p_2}(\Omega_1), L^{q_2}(\Omega_2))$. Hence, the results in the previous theorem hold true for all the Schur multipliers on $\mathcal{B}(L^p,L^q)$.
\vspace{0.2cm}

In the sequel, we will need the notion of type for a Banach space $X$, for which we refer e.g. to \cite{Kalton}. Let $(\mathcal{E}_i)_{i\in \mathbb{N}}$ be a sequence of independent Rademacher random variables. We have the following definition.

\begin{definition} A Banach space X is said to have Rademacher type p (in short, type p) for some $1 \leq p \leq 2$ if there is a constant $C$ such that for every finite set of vectors $(x_i)_{i=n}^n$ in $X$,
\begin{equation}\label{type}
\left( \mathbb{E} \left\| \sum_{i=1}^n\mathcal{E}_i x_i \right\|^p \right)^{1/p} \leq C \left( \sum_{i=1}^n \|x_i\|^p \right)^{1/p}.
\end{equation}
The smallest constant $C$ for which $(\ref{type})$ holds is called the type-p constant of $X$.
\end{definition}

We will use the fact that for $1 \leq p \leq 2$, $L^p$-spaces have type $p$ and if $2 < p < +\infty$, $L^p$-spaces have type 2 and that those are the best types for infinite dimensional $L^p$-spaces (see for instance \cite[Theorem 6.2.14]{Kalton}). We will also use the fact that the type is stable by passing to quotients. Namely, if $X$ has type $p$ and $E\subset X$ is a closed subspace, then $X/E$ has type $p$.

\begin{proposition}\label{inclusions}
$(i)$ If \ $1\leq q < p \leq 2$, then
$$\mathcal{M}(q,1) \nsubseteq \mathcal{M}(p,p).$$
Consequently, for any $1 \leq r \leq q$,
$$\mathcal{M}(q,r) \nsubseteq \mathcal{M}(p,p).$$
$(ii)$ If \ $2 \leq p < q \leq r$, then
$$\mathcal{M}(r,q) \nsubseteq \mathcal{M}(p,p).$$
$(iii)$ If $1 < q < 2 < p < +\infty$ or $1 < p < 2 < q < +\infty$, then
$$\mathcal{M}(q,q) \nsubseteq \mathcal{M}(p,p).$$
\end{proposition}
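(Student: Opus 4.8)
The plan is to prove every non-inclusion by contradiction, using the fact that a set-theoretic inclusion between two of these spaces is automatically a \emph{bounded} inclusion. Each $\mathcal{M}(a,b)$ is a Banach space for the multiplier norm, and since $|c_{ij}|=\|T_c(E_{ij})\|\le\|T_c\|$ for the matrix unit $E_{ij}$ (which has norm $1$ in $\mathcal{B}(\ell_a,\ell_b)$), convergence in multiplier norm forces entrywise convergence; hence the formal inclusion map has closed graph and, if it exists, is bounded. So to refute, say, $\mathcal{M}(q,1)\subseteq\mathcal{M}(p,p)$ it suffices to exhibit finite matrices $c^{(N)}$ with $\sup_N\|c^{(N)}\|_{\mathcal{M}(q,1)}<\infty$ but $\|c^{(N)}\|_{\mathcal{M}(p,p)}\to\infty$. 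All upper bounds will come from displaying an explicit factorization as in Corollary \ref{Schurfacto2}, and all lower bounds from feeding the \emph{same} characterization into the type of $L^p$-spaces.

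The key device is a necessary condition for membership in $\mathcal{M}(p,p)$. If $c=(c_{ij})\in\mathcal{M}(p,p)$ has norm $C$, Corollary \ref{Schurfacto2} gives $c_{ij}=\langle\xi_j,\eta_i\rangle$ with $\sup_j\|\xi_j\|_p\le1$, $\sup_i\|\eta_i\|_{p'}\le C$ over some measure space. For Rademacher signs $(\epsilon_j)$ on the columns, $\sum_j\epsilon_jc_{ij}=\langle\sum_j\epsilon_j\xi_j,\eta_i\rangle$, so $\max_i|\sum_j\epsilon_jc_{ij}|\le C\,\|\sum_j\epsilon_j\xi_j\|_p$; averaging and using the type-$t$ inequality of $L^p$ with its universal constant $T$ (independent of the underlying measure space), $t=\min(p,2)$, one obtains
$$\Big(\mathbb{E}\max_i\Big|\sum_j\epsilon_jc_{ij}\Big|^{t}\Big)^{1/t}\le C\,T\,N^{1/t},\qquad t=\min(p,2).$$
The transposed statement, with signs $(\delta_i)$ on the rows and the type of $L^{p'}$, reads
$$\Big(\mathbb{E}\max_j\Big|\sum_i\delta_ic_{ij}\Big|^{t'}\Big)^{1/t'}\le C\,T'\,m^{1/t'},\qquad t'=\min(p',2).$$
Each case below is settled by violating one of these two inequalities.

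The counterexamples are scaled sign-pattern matrices. Let $s_1,\dots,s_{2^N}$ enumerate $\{-1,1\}^N$ and let $\Phi_N$ be the $2^N\times N$ matrix with entries $s_i(j)$. Splitting $[0,1]$ into intervals $A_j$ of length $1/N$, the factorization $x_j=N^{1/q}\mathbf{1}_{A_j}$, $y_i=\sum_j s_i(j)\mathbf{1}_{A_j}$ shows that $c^{(N)}:=N^{1/q-1}\Phi_N$ lies in $\mathcal{M}(q,1)$ (indeed in $\mathcal{M}(q,q)$) with norm $\le1$, since $\|x_j\|_q=1$ and $\|y_i\|_\infty=\|y_i\|_{q'}=1$ on this probability space. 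On the other hand, choosing the row $i$ with $s_i=\epsilon$ gives $\max_i|\sum_j\epsilon_jc^{(N)}_{ij}|=N^{1/q}$ for every $\epsilon$, deterministically. Inserting this into the column-test inequality yields $\|c^{(N)}\|_{\mathcal{M}(p,p)}\ge T^{-1}N^{1/q-1/p}$ when $p\le2$ (part (i), via type $p$) and $\ge T^{-1}N^{1/q-1/2}$ when $p>2$ (the case $q<2<p$ of (iii), via type $2$); both exponents are positive under the hypotheses, giving the blow-up. The ``consequently'' clause of (i) then follows from $\mathcal{M}(q,1)\subseteq\mathcal{M}(q,r)$ in Theorem \ref{incluB}. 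For part (ii) and the case $p<2<q$ of (iii) one uses the transpose: the $m\times 2^m$ matrix $m^{-1/q}\Phi_m^{\mathsf T}$, factored by $y_i=m^{1/q'}\mathbf{1}_{B_i}$ and $x_j=\sum_i s_j(i)\mathbf{1}_{B_i}$, lies in $\mathcal{M}(r,q)$ (resp. $\mathcal{M}(q,q)$) with norm $\le1$, saturates the row-test at value $m^{1/q'}$, and the row-test inequality forces $\|\cdot\|_{\mathcal{M}(p,p)}\to\infty$ (exponent $1/q'-1/p'$ when $p\ge2$, and $1/q'-1/2$ when $p<2$).

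The main obstacle, and the reason the cases split as they do, is that once all exponents exceed $2$ every relevant $L^s$ has type only $2$, so a naive estimate cannot separate the source space from $\mathcal{M}(p,p)$. What rescues the argument is that the sign-pattern matrix saturates the relevant maximum \emph{deterministically} (value $N^{1/q}$, resp. $m^{1/q'}$), so the blow-up persists even at the type-$2$ level; the hypotheses $q<p$, $p<q$, or $q<2<p$ serve precisely to make the resulting exponent positive. The remaining work is bookkeeping: deciding in each case whether to apply the column-test or the row-test and which factor's type ($L^p$ or $L^{p'}$) to invoke, and recording that the factorization measure space may be taken to be a probability space, so that the single indicator factorization simultaneously witnesses membership in the several source spaces and the universal type constants apply.
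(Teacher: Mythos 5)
Your proof is correct, and it takes a genuinely different route from the paper's. Both arguments rest on the same two pillars, namely the factorization characterization (Corollary \ref{Schurfacto2}) and the type of $L^p$-spaces, but the paper argues structurally with a single infinite multiplier: it chooses a quotient map $\sigma\colon\ell_1\twoheadrightarrow L^q[0,1]$ (resp.\ onto $\ell_q$) and an isometry $J\colon L^1[0,1]\hookrightarrow\ell_\infty$ (resp.\ $\ell_q\hookrightarrow\ell_\infty$), defines $\phi$ by $u_\phi=J\circ I_q\circ\sigma$, and then invokes a dedicated lemma (Lemma \ref{SQ}): if $v u s\in\Gamma_p$ with $s$ a quotient map and $v$ an isometry, then $u$ factors through a subspace of a quotient of an $L^p$-space. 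Since $SQ_p$-spaces inherit the type $\min(p,2)$ of $L^p$, testing $I_q$ against normalized indicators gives (i), the lack of type $2$ of $\ell_q$, $q<2$, gives (iii), and (ii) follows from (i) by transposition duality. You replace the quotient-map/$SQ_p$ machinery by explicit finite objects: the sign matrices $\Phi_N$ whose rows exhaust $\{-1,1\}^N$, an indicator factorization over $[0,1]$ for the upper bounds, and a Rademacher-average test combined with the universal type constants of $L^p$ and $L^{p'}$ for the lower bounds, the closed graph theorem converting the set inclusion into a bounded one. Your deterministic saturation trick (every sign pattern occurs as a row, so the max is exactly $N^{1/q}$ for every choice of signs) is the key point that makes the type-$2$ barrier harmless and yields the quantitative blow-up $N^{1/q-1/\min(p,2)}$, in the spirit of Kwapie\'n--Pe\l czy\'nski; this is more elementary and more quantitative than the paper. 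What the paper's approach buys in exchange is reusability: the $SQ_p$ formulation extracted in its proof of (iii) (namely, $\mathcal{M}(q,q)\subset\mathcal{M}(p,p)$ forces $\ell_q$ to be an $SQ_p$-space) is exactly what is needed for the subsequent theorem characterizing when $\mathcal{M}(q,q)\subset\mathcal{M}(p,p)$, which your finite matrices do not directly provide. Two points you should make explicit to be fully rigorous: the completeness of the multiplier spaces $\mathcal{M}(a,b)$ (needed for the closed graph theorem) deserves a line of justification, and the finite matrices must be padded by zeros to be regarded as elements of the spaces $\mathcal{M}(a,b)$ of infinite matrices --- harmless for both the upper bounds (extend the factorization by zero) and the lower bounds (restrict the factorization to the block).
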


\vspace{0.2cm}

\noindent To prove this proposition, we will need the following definitions and lemma.

\begin{definition}
Let $X$ and $Y$ be Banach spaces. A map $s : X \rightarrow Y$ is a quotient map if $s$ is surjective and for all $y \in Y$ with $\|y\| < 1$, there exists $x\in X$ such that $\|x\| < 1$ and $s(x)=y$. This is equivalent to the fact that the injective map $\hat{s} : X/\ker(s) \rightarrow Y$ induced by $s$ is a surjective isometry.
\end{definition}

\begin{definition}
Let $X$ and $Y$ be Banach spaces, $u\in \mathcal{B}(X,Y)$ and $1 \leq p \leq \infty$. We say that $u \in SQ_p(X,Y)$ if there exists a closed subspace $Z$ of a quotient of a $L^p$-space and two operators $A\in \mathcal{B}(X,Z)$ and $B\in \mathcal{B}(Z,Y)$ such that $u= BA$.\\
Then $\| u \|_{SQ_p} = \inf \|A\| \|B\|$ defines a norm on $SQ_p(X,Y)$ and $(SQ_p(X,Y), \|.\|_{SQ_p)}$ is a Banach space.
\end{definition}

\begin{lemma}\label{SQ}
Let $W, X, Y, Z$ be Banach spaces and let $u\in \mathcal{B}(X,Y), s\in \mathcal{B}(W,X), v\in \mathcal{B}(Y,Z)$ such that
$s$ is a quotient map, $v$ is a linear isometry and $vus \in \Gamma_p(W,Z)$. Then $u \in SQ_p(X,Y)$.
\end{lemma}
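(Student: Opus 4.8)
The plan is to turn the hypothesis ``$vus\in\Gamma_p(W,Z)$'' into the factorization it provides and then transport it across the quotient map $s$ and the isometry $v$. First I would invoke the definition of $p$-factorable operators (in the general-range form referred to in the text) to obtain a measure space $(\Omega,\mu)$ together with $R\in\mathcal{B}(W,L^p(\mu))$ and $S\in\mathcal{B}(L^p(\mu),Z)$ such that $vus=SR$. Setting $N:=\ker s$, I would record the key vanishing observation: for $w\in N$ we have $s(w)=0$, hence $us(w)=0$ and $vus(w)=0$, so $SR(w)=0$. This shows $R(N)\subseteq\ker S$, and since $\ker S$ is closed, $\overline{R(N)}\subseteq\ker S$.

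Next I would pass to the quotient $Q:=L^p(\mu)/\overline{R(N)}$, which is a quotient of an $L^p$-space, with quotient map $\pi\colon L^p(\mu)\to Q$. Because $\overline{R(N)}\subseteq\ker S$, the operator $S$ drops to a bounded $\bar S\colon Q\to Z$ with $S=\bar S\pi$. On the other side, $\pi R\colon W\to Q$ vanishes on $N$; since $s$ is a quotient map the induced map $\hat s\colon W/N\to X$ is an isometric isomorphism, so $\pi R$ factors boundedly through $W/N$ and yields $\tilde R\colon X\to Q$ with $\pi R=\tilde R s$. Combining these, $\bar S\tilde R s=\bar S\pi R=SR=vus$, and since $s$ is surjective I may cancel it on the right to conclude $vu=\bar S\tilde R$ as operators on $X$.

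Finally I would produce the $SQ_p$-factorization of $u$ itself. Put $\mathcal Z:=\overline{\tilde R(X)}\subseteq Q$, a closed subspace of a quotient of an $L^p$-space, and let $A\colon X\to\mathcal Z$ be $\tilde R$ corestricted. The point is that $\bar S(\mathcal Z)$ lands in the range of $v$: on the dense set $\tilde R(X)$ we have $\bar S\tilde R(x)=vu(x)\in v(Y)$, and $v(Y)$ is closed because $v$ is an isometry, so by continuity $\bar S(\mathcal Z)\subseteq v(Y)$. Hence $B:=v^{-1}\circ(\bar S|_{\mathcal Z})\colon\mathcal Z\to Y$ is a well-defined bounded operator, and for every $x\in X$ one checks $vBA(x)=\bar S\tilde R(x)=vu(x)$, whence $BA=u$ by injectivity of $v$. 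This exhibits $u=BA$ with $\mathcal Z$ a closed subspace of a quotient of an $L^p$-space, i.e.\ $u\in SQ_p(X,Y)$. I expect the only delicate step to be this last one, namely verifying that $\bar S$ carries the subspace $\mathcal Z$ into the \emph{closed} range $v(Y)$ so that $v$ can be inverted; the quotient descent of $S$ and $R$ is routine bookkeeping with the universal property of quotient maps.
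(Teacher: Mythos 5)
Your proof is correct, and it is in essence the paper's argument run in the opposite order. The paper first passes to a subspace of the $L^p$-space on the range side, setting $F=\{x\in U:\ b(x)\in v(Y)\}$ so that the factorization of $vus$ restricts to a factorization of $us$ through $F$, and only then quotients by $E=\overline{a(\ker s)}$ to descend along $s$, ending with the intermediate space $F/E$. You instead quotient first, by $\overline{R(\ker s)}$, and then pass to the closed subspace $\mathcal{Z}=\overline{\tilde R(X)}$ of the quotient to handle the isometry $v$. The two key observations are identical in both versions: the first factor maps $\ker s$ into vectors annihilated (after closure) by the second factor, and $v(Y)$ is closed because $v$ is an isometry, so $v$ can be inverted on it. What your ordering buys is that the intermediate space $\mathcal{Z}$ is literally a closed subspace of a quotient of an $L^p$-space, matching the paper's definition of $SQ_p$ verbatim; the paper's ordering produces a quotient of a subspace, $F/E$ with $E\subset F\subset U$, which fits the definition only after the easy (and unstated) remark that $F/E$ embeds isometrically as a closed subspace of $U/E$. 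Your right-cancellation of the surjective $s$ to get $vu=\bar S\tilde R$ is also a slightly cleaner way of doing what the paper does by composing with $\hat s^{-1}$. In short: same ideas, dual order of the subspace and quotient steps, with your version aligning marginally better with the stated definition of $SQ_p$.
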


\begin{proof}
By assumption, there exist a $L^p$-space $U$ and two operators $a\in \mathcal{B}(W,U)$ and $b\in \mathcal{B}(U,Z)$ such that the following diagram commutes

$$\xymatrix @!0 @R=2pc @C=2pc {
W \ar@{->>}[rr]^s \ar[rrrdd]_{a} && X  \ar[rr]^u && Y \ar@{^(->}[rr]^v && Z   \\
& & & \\
&&& U \ar[rrruu]_b
}$$
Since $v$ is an isometry, $V := v(Y) \subset Z$ is isometrically isomorphic to $Y$. Let $\psi : Y \rightarrow V$ be the isometric isomorphism induced by $v$.\\
Set $F := \left\lbrace x \in U \ \text{such that} \ b(x) \in V \right\rbrace.$ Since $vus=ba$, we have, for all $w\in W, v(us(w))=b(a(w))$, so that $a(w) \in F$. This implies that $a(W) \subset F$. We still denote by $a$ the mapping $a : W \rightarrow F$ and by $b$ the restriction of $b$ to $F$. Denote by $\hat{b}$ the mapping $\hat{b}=\psi^{-1} \circ b : F \rightarrow Y$. Then we have the following commutative diagram

$$\xymatrix{
W \ar@{->>}[r]^s \ar[rd]_{a}
 &
X  \ar[r]^u & Y  \\
& F \ar[ur]_{\hat{b}}
}$$
Now, set $E:=\overline{a(\ker(s))}$ and let $Q : F \rightarrow F/E$ be the canonical mapping. Clearly, $Q \circ a : W \rightarrow F/E$ vanishes on $\ker(s)$, so that we have a mapping
$$ \widehat{Q \circ a} : W/\ker(s) \rightarrow F/E$$
induced by $Q \circ a$.\\
Since $s$ is a quotient map, we denote by $\widehat{s}$ the isometric isomorphism
$$\hat{s} : W/\ker(s) \rightarrow X.$$
Define
$$A = \widehat{Q \circ a} \circ \hat{s}^{-1} : X \rightarrow F/ E.$$
$\hat{b}$ vanishes on $E$ so that we have a mapping
$$B : F/E \rightarrow Y.$$
Finally, it is easy to check that $u = BA$, that is, we have the following commutative diagram
$$\xymatrix @!0 @R=2pc @C=3pc {
X  \ar[rr]^u \ar[rdd]_{A} &  &Y \\
 & & \\
& F/E \ar[uur]_B &
}$$
which concludes the proof.
\end{proof}

\begin{remark}
To prove Lemma $\ref{SQ}$, one can use a result of Kwapie\'n characterizing elements of $SQ_p$, as follows : a Banach space $X$ is isomorphic to an $SQ_q$-space if and only if there exists a constant $K\geq 1$ such that for any $n\geq 1$, for any $n\times n$ matrix $[a_{ij}]$ and for any $x_1,\ldots,x_n$ in $X$,
$$\left( \sum_i \left\| \sum_j a_{ij}x_j \right\|^q \right)^{1/q} \leq K \|[a_{ij}] : \ell_q^n \rightarrow \ell_q^n \| \left( \sum_j \|x_j\|^q \right)^{1/q}.$$
\noindent However, the proof presented in this paper also works if we replace in the statement of the lemma $\Gamma_p$ (respectively $SQ_p$) by the space of operators that can be factorized by some Banach space $L$ (respectively by a subspace of a quotient of $L$).
\end{remark}

\begin{proof}[Proof of Proposition $\ref{inclusions}$] $(i)$.
Let $\Omega:=[0,1]$ and $\lambda$ be the Lebesgue measure on $\Omega$. Let $I_q : L^q(\lambda) \rightarrow L^1(\lambda)$ be the inclusion mapping. By the classical Banach space theory (see \cite[Theorem 2.3.1]{Kalton} and \cite[Theorem 2.5.7]{Kalton}) there exist a quotient map $ \sigma : \ell_1 \twoheadrightarrow L^q(\lambda)$ and an isometry $J : L^1(\lambda) \hookrightarrow \ell_{\infty}$. Let $\phi\in \ell_{\infty}(\mathbb{N}^2)$ be such that
$$u_\phi = J I_q \sigma$$
(by $(\ref{LinfB})$ any continuous linear map $\ell_1 \rightarrow \ell_{\infty}$ is a certain $u_{\phi}$ for $\phi \in L^{\infty}(\mathbb{N}\times \mathbb{N})$). We have the following factorization
$$\xymatrix{
    \ell_1 \ar[r]^{u_{\phi}} \ar[d]_{\sigma} & \ell_{\infty} \ar@{<-}[d]^{J} \\
    L^{q}(\lambda) \ar@{^{(}->}[r]_{I_q} & L^1(\lambda)
    }$$
According to Theorem $\ref{Schurfacto}$, $\phi \in \mathcal{M}(q,1)$.\\

Assume that $\phi\in \mathcal{M}(p,p)$. Then, again by Theorem $\ref{Schurfacto}$, we have $u_{\phi} \in \Gamma_p(\ell_1,\ell_{\infty})$ and therefore, by Lemma $\ref{SQ}$, there exist an $SQ_p$-space $X$ and two operators $\alpha \in \mathcal{B}(L^q(\lambda), X)$ and $\beta\in\mathcal{B}(X,L^1(\lambda))$ such that $I_q = \beta \alpha$.\\
Let $(\mathcal{E}_i)_{i\in \mathbb{N}}$ be a sequence of independant Rademacher random variables. Let $n\in \mathbb{N}^*$ and $f_1,\ldots, f_n \in L^q(\lambda)$.
\begin{align*}
\mathbb{E} \left\| \sum_{j=1}^n \mathcal{E}_j f_j \right\|_{L^1(\lambda)}
= \mathbb{E} \left\| \sum_{j=1}^n \mathcal{E}_j \beta \alpha(f_j) \right\|_{L^1(\lambda)}
\leq \| \beta\| \mathbb{E} \left\| \sum_{j=1}^n \mathcal{E}_j \alpha(f_j) \right\|_X.
\end{align*}
But $X$ has type $p$ so there exists a constant $C_1 > 0$ such that
\begin{align*}
\mathbb{E} \left\| \sum_{j=1}^n \mathcal{E}_j f_j \right\|_{L^1(\lambda)}
\leq C_1 \| \beta\| \left( \sum_{j=1}^n \| \alpha(f_j) \|^p_X \right)^{1/p}
\leq C_1 \|\beta\| \|\alpha\| \left( \sum_{j=1}^n \| f_j \|^p_{L^q(\lambda)} \right)^{1/p}.
\end{align*}
By Khintchine inequality, there exists $C_2 > 0$ such that
$$\left\| \left( \sum_{j=1} |f_j|^2 \right)^{1/2} \right\|_{L^1(\lambda)} \leq C_2 \mathbb{E} \left\| \sum_{j=1}^n \mathcal{E}_j f_j \right\|_{L^1(\lambda)}.$$
Thus, setting $K := C_1 C_2 \| \alpha \| \| \beta\|$, we obtained the inequality
$$\left\| \left( \sum_{j=1} |f_j|^2 \right)^{1/2} \right\|_{L^1(\lambda)} \leq K \left( \sum_{j=1}^n \| f_j \|^p_{L^q(\lambda)} \right)^{1/p}.$$
Let $E_1, \ldots, E_n$ be disjoint measurable subsets of $[0,1]$ such that for all $1 \leq j \leq n, \lambda(E_j)=\dfrac{1}{n}$. Set $f_j  := \chi_{E_j}$. Then
$$\sum_j |f_j|^2 = 1 \ \ \ \text{and} \ \ \ \|f\|_{L^q(\lambda)} = n^{-1/q}.$$
Hence, applying the previous inequality to the $f_j$'s, we obtain
$$1 \leq K n^{1/p-1/q}.$$
Since $q < p$, this inequality can't hold for all $n$, so we obtained a contradiction.\\

\noindent Finally, notice that if $1 \leq r \leq q$, then by Theorem $\ref{incluB}$, $\mathcal{M}(q,1) \subset \mathcal{M}(q,r)$. Thus, $\mathcal{M}(q,r) \nsubseteq \mathcal{M}(p,p).$\\

$(ii)$.
By Proposition $\ref{discret}$ and using duality, it is easy to prove that for all $s,t \in [1,\infty], \phi$ is a Schur multiplier on $\mathcal{B}(\ell_s,\ell_t)$ if and only if $\tilde{\phi}$ is a Schur multiplier on $\mathcal{B}(\ell_{t'}, \ell_{s'})$, where $\tilde{\phi}$ is defined for all $i,j\in \mathbb{N}$ by $\tilde{\phi}(i,j)=\phi(j,i)$.\\
Let $2 \leq p < q \leq r$. Then $1 \leq r' \leq q' < p' \leq 2$. If we assume that $\mathcal{M}(r,q) \subset \mathcal{M}(p,p)$ then the latter implies $\mathcal{M}(q',r') \subset \mathcal{M}(p',p')$, which is, by $(i)$, a contradiction. This proves $(ii)$.\\

$(iii)$. By duality, it is enough to consider the case $1 < q < 2 < p < +\infty$. Assume that $\mathcal{M}(q,q) \subset \mathcal{M}(p,p)$. Using the notations introduced in the proof of $(i)$, let $\sigma : \ell_1 \rightarrow \ell_q$ be a quotient map and $J : \ell_q \rightarrow \ell_{\infty}$ be an isometry. Let $\phi \in L^{\infty}(\mathbb{N} \times \mathbb{N})$ be such that
$$u_{\phi} = J I_{\ell_q} \sigma,$$
where $I_{\ell_q} : \ell_q \rightarrow \ell_q$ is the identity map. Then $\phi \in \mathcal{M}(q,q)$. By assumption, $\phi \in \mathcal{M}(p,p)$. By Lemma $\ref{SQ}$, this implies that $I_{\ell_q} \in SQ_p(\ell_q, \ell_q)$. Clearly, this implies that $\ell_q$ is isomorphic to an $SQ_p$-space. But $\ell_q$ does not have type $2$ and any $SQ_p$ has type $2$. This is a contradiction, so $\mathcal{M}(q,q) \nsubseteq \mathcal{M}(p,p)$.
\end{proof}

\begin{theorem}
We have $\mathcal{M}(q,q) \subset \mathcal{M}(p,p)$ if and only if $1 \leq p \leq q \leq 2$ or $2 \leq q \leq p \leq +\infty$.
\end{theorem}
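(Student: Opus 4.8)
The plan is to translate the inclusion $\mathcal{M}(q,q)\subset\mathcal{M}(p,p)$ into a factorization statement and then to exploit duality to reduce the two prescribed ranges to a single one. By Corollary~\ref{Schurfacto} applied with equal indices (so that the inclusion $I$ appearing in condition~(ii) is the identity of $L^{p}(\mu)$), together with the Remark identifying $\mathcal{L}_{p,p'}$ with $\Gamma_{p}$, one has for every $\phi\in L^{\infty}(\Omega_{1}\times\Omega_{2})$ that $\phi\in\mathcal{M}(p,p)$ if and only if $u_{\phi}\in\Gamma_{p}(L^{1}(\Omega_{1}),L^{\infty}(\Omega_{2}))$, that is, $u_{\phi}$ factors boundedly through an $L^{p}$-space; likewise $\phi\in\mathcal{M}(q,q)$ iff $u_{\phi}\in\Gamma_{q}(L^{1}(\Omega_{1}),L^{\infty}(\Omega_{2}))$. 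Thus the whole statement becomes: for which pairs $(p,q)$ does $\Gamma_{q}$-factorability of $u_{\phi}$ force $\Gamma_{p}$-factorability? I would carry out the argument for $1<p,q<\infty$, which is where the content lies, the genuinely boundary cases $p\in\{1,\infty\}$ or $q\in\{1,\infty\}$ being degenerate since $\mathcal{M}(1,1)$ and $\mathcal{M}(\infty,\infty)$ then consist of all of $L^{\infty}$.

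For sufficiency I first treat $1\le p\le q\le 2$. Suppose $u_{\phi}=S\circ R$ with $R\in\mathcal{B}(L^{1}(\Omega_{1}),L^{q}(\mu))$ and $S\in\mathcal{B}(L^{q}(\mu),L^{\infty}(\Omega_{2}))$. Since $p\le q\le 2$, there is an isometric embedding $j\colon L^{q}(\mu)\hookrightarrow L^{p}(\nu)$ into some $L^{p}$-space (the classical embedding furnished by $q$-stable random variables). Because $L^{\infty}(\Omega_{2})$ is $1$-injective, the operator $S\circ j^{-1}$, defined on the subspace $j(L^{q}(\mu))$, extends to $\widetilde S\in\mathcal{B}(L^{p}(\nu),L^{\infty}(\Omega_{2}))$ with $\|\widetilde S\|=\|S\|$ and $\widetilde S\circ j=S$. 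Then $u_{\phi}=\widetilde S\circ(j\circ R)$ factors through $L^{p}(\nu)$, so $u_{\phi}\in\Gamma_{p}$ and $\phi\in\mathcal{M}(p,p)$. This disposes of the range $1\le p\le q\le 2$.

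The range $2\le q\le p\le\infty$ follows by duality. As noted in the proof of Proposition~\ref{inclusions}(ii), the transpose $\phi\mapsto\tilde\phi$ induces a bijection $\mathcal{M}(s,t)\to\mathcal{M}(t',s')$; applying this with $s=t=q$ and with $s=t=p$ shows that $\mathcal{M}(q,q)\subset\mathcal{M}(p,p)$ holds if and only if $\mathcal{M}(q',q')\subset\mathcal{M}(p',p')$. As $2\le q\le p\le\infty$ is equivalent to $1\le p'\le q'\le 2$, the latter inclusion is exactly the case just handled, so sufficiency holds on both prescribed ranges.

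It remains to prove necessity, namely that the inclusion fails whenever $1<p,q<\infty$ and $(p,q)$ lies outside the two ranges. Outside them one has either $p>q$ with $q<2$, or $p<q$ with $q>2$, and each alternative splits according to the position of the remaining exponent relative to $2$. If $1<q<p\le 2$, the consequence of Proposition~\ref{inclusions}(i) read with $r=q$ gives $\mathcal{M}(q,q)\nsubseteq\mathcal{M}(p,p)$; if $1<q<2<p<\infty$, this is Proposition~\ref{inclusions}(iii). Dually, if $2\le p<q<\infty$, Proposition~\ref{inclusions}(ii) with $r=q$ yields the non-inclusion, and if $1<p<2<q<\infty$ it is again Proposition~\ref{inclusions}(iii). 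These four subcases exhaust the complement, which completes the proof. I expect the real difficulty to sit entirely in the first sufficiency case: the point is to recognize that the correct mechanism is the isometric embedding $L^{q}\hookrightarrow L^{p}$ valid for $p\le q\le 2$, combined with the injectivity of the range space $L^{\infty}$; once this is in place, the duality reduction and the appeal to Proposition~\ref{inclusions} are routine.
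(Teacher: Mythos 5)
Your proof is correct, and its necessity half (Proposition~\ref{inclusions} applied in four subcases, plus the transposition duality $\mathcal{M}(s,t)\leftrightarrow\mathcal{M}(t',s')$) is exactly the paper's argument; but your sufficiency half for $1\leq p\leq q\leq 2$ takes a genuinely more direct route. The paper proceeds through $SQ_p$-spaces: the $q$-stable embedding shows $\ell_q$ embeds into an $L^p$-space, hence is an $SQ_p$-space; ``by approximation'' every $L^q$-space is then an $SQ_p$-space, so every $u_\phi\in\Gamma_q(\ell_1,\ell_\infty)$ factors through an $SQ_p$-space; and finally this factorization is upgraded to one through an honest $L^p$-space using \emph{both} the lifting property of $\ell_1$ and the extension property of $\ell_\infty$. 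You bypass the $SQ_p$ formalism entirely: because you embed $L^q(\mu)$ isometrically into $L^p(\nu)$ (rather than into a subspace of a quotient), only the $1$-injectivity of $\ell_\infty$ is needed to extend $S$, and no lifting is required. What the paper's detour buys is the by-product characterization that $\mathcal{M}(q,q)\subset\mathcal{M}(p,p)$ if and only if $\ell_q$ is isomorphic to an $SQ_p$-space (which ties into Kwapie\'n's theorem and reuses the machinery of Lemma~\ref{SQ}); what your route buys is brevity and transparency. One step you should make explicit: Corollary~\ref{Schurfacto2} produces an arbitrary, possibly non-separable, measure space $(\Omega,\mu)$, whereas the classical stable embedding is usually stated for $\ell_q$ or for separable $L^q$-spaces; either invoke the general version of that embedding theorem, or note that $R(\ell_1)$ is separable, hence contained in a separable sub-$L^q$-space of $L^q(\mu)$, and embed that instead. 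This is a presentational gap on par with the paper's own unexplained ``by approximation'' step, not a mathematical one. Finally, you are right to call the cases $p\in\{1,\infty\}$ degenerate: there $\mathcal{M}(p,p)$ is all of $\ell_\infty(\mathbb{N}^2)$, so the inclusion holds trivially for every $q$ (which in fact makes the ``only if'' direction delicate at the boundary); the paper's proof passes over this point in silence as well.
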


\begin{proof}
By Proposition $\ref{inclusions}$ and duality, we only have to show that when $1 \leq p \leq q \leq 2$, $\mathcal{M}(q,q) \subset \mathcal{M}(p,p)$.\\
We saw in the proof Proposition of $\ref{inclusions}$ $(iii)$ that if $\mathcal{M}(q,q) \subset \mathcal{M}(p,p)$ then $\ell_q$ is isomorphic to an $SQ_p$-space. The converse holds true. Indeed, assume that $\ell_q$ is isomorphic to an $SQ_p$-space. Then by approximation, any $L^q$-space is isomorphic to an $SQ_p$-space. Hence any element of $\Gamma_q(\ell_1,\ell_{\infty})$ factors through an $SQ_p$-space. By the lifting property of $\ell_1$ and the extension property of $\ell_{\infty}$, this implies that any element of $\Gamma_q(\ell_1,\ell_{\infty})$ factors through an $L^p$-space, that is $\Gamma_q(\ell_1, \ell_{\infty}) \subset \Gamma_p(\ell_1,\ell_{\infty})$. By Corollary $\ref{Schurfacto2}$, this implies that $\mathcal{M}(q,q) \subset \mathcal{M}(p,p)$.\\

Assume that $1 \leq p \leq q \leq 2$. By \cite[Theorem 6.4.19]{Kalton}, there exists an isometry from $\ell_q$ into an $L^p$-space, obtained by using $q-$stable processes. Hence, $\ell_q$ is an $SQ_p$-space. This concludes the proof.

\end{proof}

\begin{Problem}
Compare the other spaces of Schur multipliers. For example, if $1 < p \leq 2$, do we have
$$\mathcal{M}(p,1) = \mathcal{M}(p,p)?$$
\end{Problem}

\vskip 1cm
\noindent
{\bf Acknowledgements.} 
The author was supported by the French 
``Investissements d'Avenir" program, 
project ISITE-BFC (contract ANR-15-IDEX-03).

\newpage

\vspace{1cm}

\end{document}